\documentclass[12pt,reqno]{amsart}

\pdfoutput=1

\usepackage[letterpaper,margin=1.2in]{geometry}
\usepackage[backend=biber,style=numeric,isbn=false,url=false]{biblatex}
\usepackage{setspace}

\bibliography{extremehecke}

\newtheorem{thm}{Theorem}

\newtheorem{lem}{Lemma}

\newcommand{\C}{\mathbb{C}}
\newcommand{\R}{\mathbb{R}}
\newcommand{\Q}{\mathbb{Q}}

\newcommand{\OO}{\mathcal{O}}

\newcommand{\Real}{\mathrm{Re} \,}
\newcommand{\Imag}{\mathrm{Im} \,}
\newcommand{\sgn}{\mathrm{sgn}}
\newcommand{\dx}{\mathrm{d}}
\newcommand{\mf}{\mathfrak}

\usepackage{changepage}
\usepackage{amssymb}
\usepackage{amsmath}
\usepackage{graphicx}

\usepackage{xcolor}
\definecolor{couleur_cite}{rgb}{0.05,.4,0.05}
\definecolor{couleur_link}{rgb}{0.05,0.05,0.4}
\definecolor{couleur_url}{rgb}{0.5,0,0}
\usepackage[hyperfootnotes=false]{hyperref}
\hypersetup{hypertexnames=false,colorlinks=true,citecolor=couleur_cite,linkcolor=couleur_link,urlcolor=couleur_url,
pdfstartview=FitH, pdfauthor=Daniel White, pdftitle=Extreme values of Hecke L-functions}

\onehalfspacing

\begin{document}

\title[Extreme values of Hecke $L$-functions]{Extreme values of Hecke $L$-functions to angular characters}
\author{Daniel White}
\address{Gettysburg College, Department of Computer Science, 300 North Washington Street, Gettysburg, PA 17325}
\email{dwhite@gettysburg.edu}

\begin{abstract}
Let $K$ be an imaginary quadratic number field and let $L(s,\xi_{\ell})$ denote the Hecke $L$-function to an angular character $\xi_{\ell}$ with frequency $\ell$. We detect values of $\log |L(\tfrac12,\xi_{\ell})|$ with size at least $(\sqrt{2} + o_K(1))(\log X / \log \log X)^{1/2}$ along each dyadic range $X \leqslant \ell \leqslant 2X$. This result relies on the resonance method, which is applied for the first time to this family of $L$-functions, where the classification and extraction of diagonal terms depends on the geometry of the complex embedding of $K$.
\end{abstract}

\subjclass[2020]{Primary 11R42; Secondary 11R04, 11R11}
\keywords{$L$-functions, extreme values, angular characters, Hecke characters, imaginary quadratic number fields}

\maketitle
\thispagestyle{empty}

\section{Introduction}
\label{extreme_values_intro}
The resonance method developed by Soundararajan \cite{Sound2008} is a flexible tool for detecting both unusually large and unusually small values in families of $L$-functions. In its original application to the zeta function, this method exploits the oscillatory behavior of $\zeta(\tfrac12+it)$ by generating a Dirichlet polynomial
\begin{equation*}
R(t) = \sum_{n \leqslant N} r(n) n^{-it}
\end{equation*}
that ``resonates'' with the ``louder frequencies'' of $\zeta(\tfrac12 + it)$ on $T \leqslant t \leqslant 2T$ for some later optimized choice of real-valued resonator coefficients $r(n)$. The argument proceeds by unpacking, reassembling, and analyzing what is essentially the first moment of
\begin{equation}
\label{resonated}
\zeta( \tfrac12 + it ) |R(t)|^2 \approx \sum_{k \leqslant T} k^{-1/2} \sum_{n,m \leqslant N} r(n) r(m) \left( \frac{n}{mk} \right)^{it}
\end{equation}
on this interval, where one must balance the length $N$ of the resonator --- a parameter commensurate with the strength of the resulting bound \eqref{sound_zeta} --- against the contribution of off-diagonal terms ($n \neq mk$) which are difficult to manage. Comparing the mean value of \eqref{resonated} to the mean of $|R(t)|^2$ and optimizing the resonator coefficients yields
\begin{equation}
\label{sound_zeta}
\max_{T \leqslant t \leqslant 2T} \log \left |\zeta ( \tfrac12 + it ) \right| \geqslant (1 + o(1)) \sqrt{\frac{\log T}{\log \log T}}.
\end{equation}
The bound \eqref{sound_zeta} substantially improved upon the previously best known result \cite{Bala1986,BalaRama1977} and is much larger than the bulk of the normal distribution with mean value $0$ and variance $(\log \log T)/2$; see \cite[\S 11.13]{Titch1986}. This result is fairly close to predictions from random matrix theory \cite{FGH2007} that suggest the right hand side of \eqref{sound_zeta} may be within a factor of $(\log \log T)/\sqrt{2}$ to the true maximum.

Bondarenko and Seip \cite{BondSeip2018,BondSeip2017} have since managed to substantially extend the length of the resonator for the zeta function, thus improving \eqref{sound_zeta}, but at the cost of localization; the full strength of their result requires the entire segment $0 \leqslant t \leqslant T$ and can not recover information along dyadic ranges.

For a number field $K$, the pure adelic analogue for the set of values of $\zeta(\tfrac12 + it)$ is the set of central values of Hecke $L$-functions to Hecke characters that ramify only at a single place; see the upcoming Section \ref{Hecke_characters} for a brief recollection of Hecke characters. Over $\Q$ this presents the additional problem of detecting large values of $L(\tfrac12, \chi)$ for Dirichlet characters $\chi$ over prime power moduli. In a related but not identical direction, Hough \cite{Hough2016} used the resonance method to establish a result of quality similar to $\eqref{sound_zeta}$ within prescribed angular sectors for Dirichlet characters varying on prime moduli, while de la Bret\`{e}che and Tenenbaum \cite{BretTenen2019} have since improved the quality of this bound for such even characters (without the angular sector restriction) to the level of Bondarenko and Seip's result on the zeta function.

The complexity of Hecke characters expands as one moves to proper extensions of $\Q$. For a fundamental example beyond both the archimedean and Dirichlet character types, when $K$ is an imaginary quadratic number field, \emph{angular characters} emerge from sole ramification at the unique complex place. On principal ideals of the ring of integers $\OO_K$, these characters behave precisely as $\xi_{\ell}((\beta)) = e(\ell \arg \beta / 2 \pi)$ for integers $\ell$ divisible by $|\OO_K^{\times}|$, where $\beta$ is an element of a fixed complex embedding of $\OO_K$. The integer $\ell$ is said to be the \emph{frequency} of the character and there are multiple angular characters of any given permissible frequency when $\OO_K$ is not a PID; this multiplicity is exactly equal to the size of the class group. The purpose of this article is to apply for the first time the resonance method to Hecke $L$-functions to angular characters to address for the first time the problem of detecting their large values, complementing the progress made on the archimedean and Dirichlet character types over $\Q$. We prove the following.

\begin{thm}
\label{main_result}
Let $K$ be an imaginary quadratic number field and $\mf{F}_{\ell}$ be its associated set of angular characters with frequency $\ell$. Then
\[
\max_{\substack{X \leqslant \ell \leqslant 2X \\ \xi_{\ell} \in \mf{F}_{\ell}}} \log \left| L( \tfrac{1}{2}, \xi_{\ell} ) \right| \geqslant \left( \sqrt{2} + o_K(1) \right) \sqrt{\frac{\log X}{\log \log X}}.
\]
\end{thm}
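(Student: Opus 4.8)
The plan is to run the resonance method for the family $\{\xi_\ell : X \leqslant \ell \leqslant 2X,\ \xi_\ell \in \mf{F}_\ell\}$. Fix a smooth $\Phi \geqslant 0$ supported on $[1,2]$ and build a resonator
\[
R(\ell) = \sum_{N\mf a \leqslant N_0} r(\mf a)\, \xi_\ell(\mf a),
\]
a Dirichlet polynomial over integral ideals $\mf a$ of $\OO_K$, with $r$ real and multiplicative, supported on squarefree ideals whose prime divisors have norm in a range to be optimised. Set $M_1 = \sum_\ell \Phi(\ell/X) \sum_{\xi_\ell \in \mf{F}_\ell} L(\tfrac12,\xi_\ell)\,|R(\ell)|^2$ and $M_2 = \sum_\ell \Phi(\ell/X) \sum_{\xi_\ell \in \mf{F}_\ell} |R(\ell)|^2$. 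Since $|R(\ell)|^2 \geqslant 0$, some frequency and character satisfy $\Real L(\tfrac12,\xi_\ell) \geqslant \Real M_1 / M_2$, and for that pair $\log|L(\tfrac12,\xi_\ell)| \geqslant \log(\Real M_1/M_2)$ once $\Real M_1 > 0$; so the task is to estimate $M_1,M_2$ and optimise $r$. The one genuinely new structural input is the admissible length $N_0$: it is governed not by the size $\asymp X$ of the set of frequencies but by the analytic conductor of $\xi_\ell$, which is $\asymp_K \ell^2 \asymp X^2$, and taking $N_0$ just below $X^2$ is exactly what upgrades Soundararajan's constant $1$ to $\sqrt2$.

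For $M_2$, expanding $|R(\ell)|^2$ and executing the inner sum over $\mf{F}_\ell$ is an orthogonality relation for $\mathrm{Cl}_K$: since $\mf{F}_\ell$ is a coset of $\widehat{\mathrm{Cl}_K}$, one gets $\sum_{\xi_\ell \in \mf{F}_\ell}\xi_\ell(\mf a)\overline{\xi_\ell(\mf b)} = h_K\,\xi^{(0)}_\ell((\gamma))$ when $\mf a \sim \mf b$ with $\mf a\mf b^{-1} = (\gamma)$, and $0$ otherwise, and $\xi^{(0)}_\ell((\gamma)) = e(\ell\arg\gamma/2\pi)$. Here the geometry of the embedding $K \hookrightarrow \C$ enters: the remaining sum $\sum_{\ell\equiv 0\,(w)}\Phi(\ell/X)\,e(\ell\arg\gamma/2\pi)$, with $w = |\OO_K^\times|$, is $\asymp X$ when $\arg\gamma \in \tfrac{2\pi}{w}\Z$ and is $O_A(X(1+X\|\tfrac{w}{2\pi}\arg\gamma\|)^{-A})$ otherwise, and a nonzero element of $K$ has argument in $\tfrac{2\pi}{w}\Z$ precisely when it lies on one of the $w$ rays $\OO_K^\times\cdot\Q_{>0}$, which forces $\mf a = \mf b$. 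Hence $M_2 \sim h_K\widehat\Phi(0)\,X \sum_{\mf a} r(\mf a)^2$. For $M_1$ I would insert a smoothed approximate functional equation $L(\tfrac12,\xi_\ell) = \sum_{\mf c}\tfrac{\xi_\ell(\mf c)}{\sqrt{N\mf c}}V(\tfrac{N\mf c}{\sqrt Q}) + \varepsilon(\xi_\ell)\sum_{\mf c}\tfrac{\overline{\xi_\ell(\mf c)}}{\sqrt{N\mf c}}\widetilde V(\tfrac{N\mf c}{\sqrt Q})$ with $Q \asymp_K \ell^2$ and repeat the expansion: a term survives the $\mf{F}_\ell$-sum only when $\mf a\mf c \sim \mf b$, and survives the $\ell$-sum at full strength only when the generator of $\mf a\mf c\mf b^{-1}$ lies on one of those rays, i.e. $\mf a\mf c = (q)\,\mf b$ with $q \in \Q_{>0}$. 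The main diagonal $q=1$, namely $\mf b = \mf a\mf c$, gives
\[
M_1 \sim c_\Phi\, h_K\, X \sum_{\mf c} \frac{1}{\sqrt{N\mf c}}\sum_{\mf a} r(\mf a)\,r(\mf a\mf c),
\]
a positive real; $c_\Phi$ absorbs the dual term, whose root number for the CM form $L(s,\xi_\ell)$ is an essentially fixed function of $\ell$ and which is handled likewise (or, for $K \ne \Q(i)$, is negligible since $\pm i \notin \OO_K$), and the cutoff $V$ is harmless because the diagonal is carried by $\mf c$ of norm $X^{o(1)}$.

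Dividing, $M_1/M_2$ is the ratio Soundararajan optimised, with the Euler product now over prime ideals of $\OO_K$. The relevant fact is that the prime ideal theorem for $K$, combined with there being two prime ideals above each split rational prime while by Chebotarev exactly half of the rational primes split, gives $\#\{\mf p : N\mf p \leqslant t\} \sim t/\log t$, the same density as over $\Q$ — the two prime ideals per split prime exactly compensating the thinning by one half. Thus the optimal resonator (supported on squarefree ideals with prime divisors of norm in $(L^2,\exp((\log L)^2))$, $L = \sqrt{\log N_0 \log\log N_0}$, $r(\mf p) = L/(\sqrt{N\mf p}\,\log N\mf p)$) yields $M_1/M_2 \geqslant \exp\big((1+o_K(1))\sqrt{\log N_0/\log\log N_0}\big)$ verbatim from the $\Q$ computation. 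Choosing $N_0 = X^{2-o(1)}$ makes this $\exp\big((\sqrt2 + o_K(1))\sqrt{\log X/\log\log X}\big)$, which with $\log|L(\tfrac12,\xi_\ell)| \geqslant \log(\Real M_1/M_2)$ is Theorem \ref{main_result}.

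The main obstacle is justifying $N_0 = X^{2-o(1)}$, i.e. showing the off-diagonal and spurious-diagonal ($q>1$) contributions to $M_1$ are negligible against the main term at this length: here the trivial bound for the off-diagonal overshoots the main term by a wide margin, so one must genuinely harvest the cancellation in the $\ell$-sum. A triple $(\mf a,\mf b,\mf c)$ with $\mf a\mf c \sim \mf b$ whose generator $\gamma$ of $\mf a\mf c\mf b^{-1}$ is off the $w$ rays contributes $\ll \frac{|r(\mf a)r(\mf b)|}{\sqrt{N\mf c}}\cdot X(1 + X\|\tfrac{w}{2\pi}\arg\gamma\|)^{-A}$, so one must count, weighted by $|r(\mf a)r(\mf b)|/\sqrt{N\mf c}$, the triples for which $\arg\gamma$ lands within $\asymp 1/X$ of $\tfrac{2\pi}{w}\Z$. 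Writing $\gamma = \gamma_1/\gamma_2$ with $\gamma_1,\gamma_2 \in \OO_K$ of controlled norm, the condition is that the algebraic integer $\gamma_1^{w}\,\overline{\gamma_2}^{\,w}$ lie in a narrow sector about the real axis; bounding the number of $\OO_K$-lattice points in such sectors and summing over the dyadic scales of the half-angle is the crux. This "classification and extraction of diagonal terms", hinging on the geometry of $K \hookrightarrow \C$, is the heart of the argument, the determination of $\varepsilon(\xi_\ell)$ being subsidiary.
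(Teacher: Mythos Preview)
Your proposal contains a fatal error: the claim that the resonator may have length $N_0 = X^{2-o(1)}$. The length of the resonator is constrained not by the analytic conductor but by the amount of averaging available, and here the family has only $\asymp X$ members (frequencies $\ell \in [X,2X]$, each carrying $h_K$ characters). Concretely, for an off-diagonal pair in $M_2$ the generator $\gamma$ of $\mf a\mf b^{-1}$ satisfies $\arg\gamma = \arg(\alpha\overline\beta)$ for some $\alpha\overline\beta \in \OO_K$ of absolute value $\ll_K N_0$; when this lattice point is off the real line one only gets $|\arg\gamma| \gg_K 1/N_0$, so the smoothed $\ell$-sum $X\hat\Phi(X\arg\gamma/2\pi)$ yields no decay whatsoever once $N_0 \gg X$. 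Already for $M_2$ one is forced to take $N_0 \leqslant X^{1-\varepsilon}$, and the presence of $\mf c$ in $M_1$ tightens this further; the paper in fact works with $N = X^{1/4-\varepsilon}$. With the $q=1$ diagonal you isolate, the standard optimization then gives at best the constant $1$, not $\sqrt 2$.

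The paper's $\sqrt 2$ has an entirely different origin. Rather than writing $\xi_\ell(\mf a)\overline{\xi_\ell(\mf b)} = \xi_\ell(\mf a\mf b^{-1})$ and taking the diagonal $\mf b = \mf a\mf c$, the paper uses $\xi_\ell(\mf a)\overline{\xi_\ell(\mf b)} = \xi_\ell(\mf a\overline{\mf b})$, so that $\mf k\mf a\overline{\mf b}$ is an \emph{integral} ideal, and declares the diagonal to be the full set $\mf k\mf a\overline{\mf b} \in P_0$ (rationally generated). This is much richer than your $q=1$ condition: it allows $\mf a_0, \mf b_0$ to range freely over $P_0$ and only pins down the $P'$-parts via $\mf k = \mf k_0\overline{\mf a''}\mf b''$. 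The resulting Euler products, for both numerator and denominator, acquire local factors at each conjugate pair $\{\mf p,\overline{\mf p}\}$ of the shape $1 + 4r(\mf p)^2 + r(\mf p)^4$ (denominator) and $1 + 4r(\mf p)/\sqrt{\mf N\mf p} + 4r(\mf p)^2 + \cdots$ (numerator); the extra factor $4$ in front of $r(\mf p)/\sqrt{\mf N\mf p}$, together with the matching $4r(\mf p)^2$ governing the Rankin constraint, is what drives the optimal $L = \sqrt{\tfrac12\log N\log\log N}$ and produces $\sqrt 2$ even with the short resonator $N = X^{1/4-\varepsilon}$. Your assertion that the $M_2$ diagonal forces $\mf a = \mf b$ is accordingly also incorrect for the resonator support actually used, and your treatment of the dual sum (it is not negligible; the root number here is $+1$ and both halves of the approximate functional equation contribute symmetrically) is a further, though secondary, inaccuracy.
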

The most substantial contextual difference in our application of the resonance method arises in Sections \ref{offdiagonal_section} and \ref{diagonal_contribution} when one seeks to isolate and then analyze the diagonal terms (the analogues of $n=mk$ in \eqref{resonated}) of the analogous quantity which appears, very roughly, in this article as
\begin{equation}
\nonumber
\sum_{\substack{ \mf{N}\mf{a},\mf{N}\mf{b} \leqslant N \\ \mf{N} \mf{k} \ll X \\  \mf{k} \mf{a}\overline{\mf{b}} \text{ principal}}} \frac{r(\mf{a}) r(\mf{b})}{\sqrt{\mf{N} \mf{k}}} \sum_{k \asymp X} e( k\arg \gamma_{\mf{k} \mf{a} \overline{\mf{b}}} / 2 \pi),
\end{equation}
where $\gamma_{\mf{n}}$ is the generator of the principal ideal $\mf{n}$ nearest to the positive real line with respect to rotation about the origin. A similar, but more relaxed, analysis precedes this in Section \ref{offdiagonal_section} when examining the mean of our resonator polynomial. At this point, one must consider the geometry of the embedding of $K$ into $\mathbb{C}$. Indeed, the diagonal condition becomes characterized by whether the product $\mf{k} \mf{a} \overline{\mf{b}}$ is principally generated by a rational element of $\OO_K$. This is the only way the innermost sum above can avoid meaningful cancellation when $N$ is bounded appropriately in terms of $X$. 

We therefore bound length of the resonator in terms of $X$ (as is typical) so that our analysis of the quantity illustrated above depends on an area $\C$ where the embedding of $\OO_K$ is ``sufficiently discretized'' such that all $\gamma_{\mf{n}}$ \emph{near the real line} are \emph{exactly rational}. As a consequence, the off-diagonal terms where the $\mf{k} \mf{a} \overline{\mf{b}}$ are not rationally generated will not meaningfully contribute.

\subsection{Notation} In this article, unless stated otherwise, we let $\varepsilon$ represent a small, fixed positive number that may differ from line to line and let $K$ be an imaginary quadratic number field. As usual, $f \ll g$ and $f = O(g)$ mean that $|f| \leqslant Cg$ for some constant $C>0$ which may differ from line to line and depends on no parameters except for possibly $\varepsilon$, the number field $K$, or any other parameters explicitly stated. Additionally, $f=o(g)$ indicates that $f/g \to 0$ in a limit that will be clear from context. As is customary in analytic number theory, we write $e(z)$ to mean $\exp(2 \pi i z)$.

\subsection{Acknowledgments} The author would like to thank Djordje Mili\'cevi\'c for his thoughtful support and our many helpful discussions during the writing of this article. The author would like to further recognize and thank Bryn Mawr College for its support during the writing of this article, which was completed in partial fulfillment of the requirements for the degree of Doctor of Philosophy in the Department of Mathematics.

\section{Preliminaries}
\subsection{Hecke characters}
\label{Hecke_characters}
We now recall the basic properties of Hecke characters; the reader may reference \cite[Section~7.6]{Neukirch} for additional details. Let $K$ be a general number field and $\OO_K$ its ring of integers. We will denote the group of non-zero fractional ideals as $\mf{I}$; it contains the subgroup $\mf{P}$ of non-zero fractional principal ideals. The quotient group $\mf{I}/\mf{P}$ is the class group, which has finite size $h_K$. For a non-zero integral ideal $\mf{m} \subseteq \OO_K$, we further denote $\mf{I}_{\mf{m}}$ as the set of all fractional ideals coprime to $\mf{m}$.

Suppose $K$ has $r_1$ real embeddings and $r_2$ complex embeddings up to conjugation. A Hecke character modulo $\mf{m}$ is a continuous homomorphism $\xi: \mf{I}_{\mf{m}} \to S^1$ (when interpreted as a morphism on the idele class group of $K$) for which there are two characters
\[
\chi_f: (\OO_K/\mf{m})^{\times} \to S^1 \quad \text{and} \quad \chi_{\infty}: (\R^{\times})^{r_1} \times (\C^{\times})^{r_2} \to S^1
\]
such that $\xi((\beta)) = \chi_f(\beta) \chi_{\infty}(\beta)$ for every $\beta \in \OO_K$ where $((\beta),\mf{m})=1$. A pair of characters $(\chi_f,\chi_{\infty})$ is associated to a Hecke character if and only if $\chi_f(\epsilon) \chi_{\infty}(\epsilon) = 1$ for every $\epsilon \in \OO_K^{\times}$. The Hecke characters associated to such a pair differ by multiplication of a character of the class group. Let $\mathcal{H}$ denote the set of all Hecke characters of $K$.

We now focus our attention to when $K$ is an imaginary quadratic number field and fix an embedding into $\C$ so that no ambiguity may occur from conjugation. Hence we may use $\beta \in \OO_K$ to mean both the ring element and its image under the embedding. Let $\omega_K = |\OO_K^{\times}|$ and for each integer $\ell$ divisible by $\omega_K$ define the set
\begin{equation}
\label{characters_definition}
\mf{F}_{\ell} := \left\{ \xi_{\ell} \in \mathcal{H}: \xi_{\ell}\big((\beta) \big) = e \left(\frac{\ell \arg \beta}{2 \pi} \right) \text{ for all non-zero } \beta \in \OO_K \right\}
\end{equation}
of angular characters with frequency $\ell$ as in Theorem \ref{main_result}, where $|\mf{F}_{\ell}| = h_K$. In the event that $\omega_K \nmid \ell$, we simply take $\mf{F}_{\ell}$ to be empty. It will be useful to have a standard choice of generator for principal ideals that we encounter. For a principal ideal $\mf{n}$, let $\gamma_{\mf{n}}$ denote the generator of $\mf{n}$ whose principal argument resides in $[-\pi \omega_K^{-1},\pi \omega_K^{-1})$.

\subsection{Functional equation}
For any number field $K$, we may associate to any Hecke character its Hecke $L$-function
\[
L(s, \xi) = \sum_{\mf{k} \neq 0} \xi(\mf{k}) \mf{N} \mf{k}^{-s}
\]
which converges absolutely for $\Real s >1$ and has meromorphic continuation to $\C$. Each such $L$-function possesses a functional equation. In particular, for the $L$-functions of interest in this article, ~\cite[Theorem~3.8]{IwanKowal2004} boils down to the following.

\begin{thm}
\label{functional_equation_thm}
Let $D$ denote the discriminant of the imaginary quadratic number field $K$ and $\xi_{\ell}$ be an angular character of frequency $\ell$. The function
\begin{equation}
\label{lambda_definition}
\Lambda(s,\xi_{\ell}) = |D|^{s/2} (2 \pi)^{-s} \Gamma(s + |\ell|/2) L(s,\xi_{\ell})
\end{equation}
is entire for $\xi_{\ell}$ non-trivial and satisfies the functional equation
\begin{equation}
\label{functional_equation}
\Lambda(s,\xi_{\ell}) = \Lambda(1-s,\overline{\xi_{\ell}}).
\end{equation}
\end{thm}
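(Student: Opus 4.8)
The plan is to read Theorem~\ref{functional_equation_thm} off from the general functional equation for Hecke $L$-functions in \cite[Theorem~3.8]{IwanKowal2004} by identifying the arithmetic and archimedean data of $\xi_{\ell}$; the only genuinely subtle point is that the resulting root number equals $1$.

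First I would observe that $\xi_{\ell}$ is unramified at every finite place: the defining relation $\xi_{\ell}((\beta)) = e(\ell \arg\beta / 2\pi)$ holds for \emph{all} nonzero $\beta \in \OO_K$, so the finite component of $\xi_{\ell}$ is trivial and its conductor is $\mf{m} = \OO_K$. Consequently the conductor appearing in \cite[Theorem~3.8]{IwanKowal2004} is $|D|\cdot\mathrm{N}\mf{m} = |D|$, which yields the factor $|D|^{s/2}$. At the unique complex place the local component of $\xi_{\ell}$ is $z \mapsto (z/|z|)^{\pm\ell}$, whose archimedean factor is $\Gamma_{\C}(s + |\ell|/2) = 2(2\pi)^{-(s+|\ell|/2)}\Gamma(s + |\ell|/2)$; since the prefactor $2(2\pi)^{-|\ell|/2}$ does not depend on $s$ and is unchanged when $\xi_{\ell}$ is replaced by $\overline{\xi_{\ell}}$ (because $|-\ell| = |\ell|$), dividing it out leaves precisely the normalization $(2\pi)^{-s}\Gamma(s + |\ell|/2)$ of \eqref{lambda_definition} without disturbing the shape \eqref{functional_equation} of the functional equation. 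Entirety is then immediate: $L(s,\xi_{\ell})$ is holomorphic on all of $\C$ unless $\xi_{\ell}$ is trivial (the exceptional case being $\zeta_K$, with its pole at $s = 1$), and $\Gamma(s + |\ell|/2)$ is zero-free.

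The hard part is the root number. \cite[Theorem~3.8]{IwanKowal2004} a priori gives $\Lambda(s,\xi_{\ell}) = \epsilon(\xi_{\ell})\Lambda(1-s,\overline{\xi_{\ell}})$ with $|\epsilon(\xi_{\ell})| = 1$, and I must show $\epsilon(\xi_{\ell}) = 1$. Since $\xi_{\ell}$ is unramified at all finite primes there is no Gauss-sum contribution, so $\epsilon(\xi_{\ell})$ reduces to a product of archimedean local root numbers, which I would evaluate directly. As an independent check I would run the classical theta-function derivation: apply the Mellin identity
\[
\pi^{-(s+\ell/2)}\Gamma(s + \ell/2)\, \beta^{\ell} (\beta\overline{\beta})^{-(s+\ell/2)} = \beta^{\ell} \int_0^{\infty} e^{-\pi |\beta|^2 t}\, t^{s+\ell/2}\, \frac{\dx t}{t}
\]
(and the analogous ones class by class) to $L(s,\xi_{\ell})$, sum over the lattice $\OO_K \subset \C$ and over lattices representing the remaining ideal classes, and invoke Poisson summation together with Bochner's formula $\widehat{z^{\ell} e^{-\pi|z|^2}} = (-i)^{\ell} w^{\ell} e^{-\pi|w|^2}$ and the explicit form of the dual lattice (a rotated rescaling of $\OO_K$ by $|D|^{-1/2}$, which is exactly what $|D|^{s/2}$ is designed to absorb). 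The transformed theta series is the one computing $\Lambda(1-s,\overline{\xi_{\ell}})$ --- the appearance of $\overline{\xi_{\ell}}$ rather than $\xi_{\ell}$ being traceable to the conjugation implicit in the duality --- and the surviving scalar is a root of unity that collapses to $1$ because $\omega_K \mid \ell$ forces $\ell$ to be even; concretely $(-i)^{\ell}(-1)^{\ell/2} = (-1)^{\ell} = 1$. Tracking this last scalar carefully enough to be certain it is $+1$ rather than $-1$ or a fourth root of unity is the one delicate step; everything else is bookkeeping on top of \cite[Theorem~3.8]{IwanKowal2004}.
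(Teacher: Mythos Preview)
Your proposal is correct and in fact considerably more detailed than what the paper does. The paper offers no proof of this theorem at all: it simply introduces the statement with the phrase ``\cite[Theorem~3.8]{IwanKowal2004} boils down to the following'' and then states \eqref{lambda_definition}--\eqref{functional_equation} without further justification. In particular, the paper does not explicitly identify the conductor, does not discuss the archimedean gamma factor, and does not address the root number.

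Your write-up therefore goes well beyond the paper's treatment. You correctly isolate the one nontrivial point --- that $\epsilon(\xi_{\ell}) = 1$ --- and your two proposed routes (direct evaluation of the archimedean local epsilon factor, and the theta/Poisson argument using $\omega_K \mid \ell$ to force $\ell$ even) are both sound ways to see this. The paper, by contrast, simply absorbs this into the citation of Iwaniec--Kowalski and moves on.
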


When examining $L$-functions along the critical line, it is often beneficial to express the $L$-function as a smoothly weighted Dirichlet series in what is known as the approximate functional equation. Roughly speaking, and in general, this weight has approximate value one until its transition point, where it then drops rapidly to zero. One may think of the approximate functional equation as essentially a finite sum. For Hecke $L$-functions, the location of this point and the decay rate are both subject to the complexity of the associated character.

\begin{thm}
\label{AFE}
For $K$ and $\xi_{\ell}$ as in Theorem \ref{functional_equation_thm},
\[
L\left(\tfrac12,\xi_{\ell}\right) = \sum_{\mf{k}\neq (0)} \frac{\xi_{\ell}(\mf{k}) + \overline{\xi_{\ell}}(\mf{k})}{\sqrt{\mf{N}\mf{k}}} W_K \left(\mf{N}\mf{k}, |\ell| \right)
\]
where
\begin{equation}
\label{cutoff_definition}
W_K\bigg(\frac{|D|^{1/2}y}{2 \pi}, 2x \bigg) = V(y,x) = \frac{1}{2 \pi i} \int_{(1)} \rho_s(x) y^{-s} \frac{ \dx s}{s}
\end{equation}
and
\begin{equation}
\label{rho_definition}
\rho_s(x) = \frac{\Gamma (x+s+\tfrac12)}{\Gamma (x + \tfrac12 )}.
\end{equation}
\end{thm}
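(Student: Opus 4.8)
The plan is to run the standard contour–integral derivation, playing the Dirichlet series of $L(s,\xi_\ell)$ against the functional equation \eqref{functional_equation}. We take $\xi_\ell$ nontrivial, which is automatic when $\ell \neq 0$ and in particular throughout the range $X \leqslant \ell \leqslant 2X$ of Theorem~\ref{main_result}, so that $\Lambda(s,\xi_\ell)$ is entire by Theorem~\ref{functional_equation_thm}. I would begin from
\[
I := \frac{1}{2\pi i}\int_{(2)} \Lambda\!\left(\tfrac12 + s, \xi_\ell\right)\frac{\dx s}{s},
\]
which converges absolutely: by Stirling's formula the factor $\Gamma(\tfrac12 + s + |\ell|/2)$ decays exponentially in $|\Imag s|$ on any fixed vertical line, while $L(\tfrac12+s,\xi_\ell)$ is of polynomial growth there (absolutely convergent on $\Real s = 2$, and controlled on $\Real s = -2$ via \eqref{functional_equation} together with a Phragm\'en--Lindel\"of bound). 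The same estimates will justify every contour shift below.

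First I would evaluate $I$ ``from the right''. On $\Real s = 2$ insert the absolutely convergent series $L(\tfrac12+s,\xi_\ell) = \sum_{\mf{k}\neq(0)}\xi_\ell(\mf{k})\mf{N}\mf{k}^{-1/2-s}$ and interchange summation with integration (legitimate by absolute convergence and the exponential decay of the gamma factor). Factoring out of \eqref{lambda_definition} the value at $s=0$ of the archimedean factors, the $\mf{k}$-th integral becomes $\frac{1}{2\pi i}\int_{(2)} \bigl(|D|^{1/2}/(2\pi\,\mf{N}\mf{k})\bigr)^{s}\rho_s(|\ell|/2)\,\dx s/s$; by \eqref{rho_definition} this integrand's only singularity with $\Real s > -\tfrac12$ is the simple pole of $1/s$ at the origin, so shifting the contour to $\Real s = 1$ identifies it with $V\bigl(2\pi\,\mf{N}\mf{k}/|D|^{1/2},\, |\ell|/2\bigr) = W_K(\mf{N}\mf{k},|\ell|)$ in the notation of \eqref{cutoff_definition}. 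Hence
\[
I = |D|^{1/4}(2\pi)^{-1/2}\,\Gamma\!\left(\tfrac12+\tfrac{|\ell|}{2}\right)\sum_{\mf{k}\neq(0)}\frac{\xi_\ell(\mf{k})}{\sqrt{\mf{N}\mf{k}}}\,W_K(\mf{N}\mf{k},|\ell|).
\]

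Then I would evaluate $I$ ``from the left'' by shifting the contour to $\Real s = -2$. Since $\Lambda(\tfrac12+s,\xi_\ell)$ is entire, the only residue crossed is that of $1/s$ at $s=0$, equal to $\Lambda(\tfrac12,\xi_\ell) = |D|^{1/4}(2\pi)^{-1/2}\Gamma(\tfrac12+\tfrac{|\ell|}{2})L(\tfrac12,\xi_\ell)$. In the leftover integral on $\Real s = -2$, substitute $s \mapsto -s$ and apply \eqref{functional_equation} in the form $\Lambda(\tfrac12 - s, \xi_\ell) = \Lambda(\tfrac12 + s, \overline{\xi_\ell})$; this rewrites the leftover as $-1$ times the analogue of $I$ for $\overline{\xi_\ell}$, which lies in $\mf{F}_{-\ell}$ and hence carries the same archimedean data and cutoff $W_K(\cdot,|\ell|)$, so the computation of the previous paragraph applied to $\overline{\xi_\ell}$ gives
\[
I = |D|^{1/4}(2\pi)^{-1/2}\Gamma\!\left(\tfrac12+\tfrac{|\ell|}{2}\right)\Bigl( L(\tfrac12,\xi_\ell) - \sum_{\mf{k}\neq(0)}\frac{\overline{\xi_\ell}(\mf{k})}{\sqrt{\mf{N}\mf{k}}}\,W_K(\mf{N}\mf{k},|\ell|)\Bigr).
\]
Equating the two formulas for $I$ and cancelling the nonzero factor $|D|^{1/4}(2\pi)^{-1/2}\Gamma(\tfrac12+\tfrac{|\ell|}{2})$ yields the asserted identity. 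The argument is routine; the only points wanting care are matching the archimedean normalization of \eqref{lambda_definition} exactly to the definitions \eqref{cutoff_definition}--\eqref{rho_definition} of $W_K$, $V$, and $\rho_s$, and recording the uniform Stirling and convexity bounds underlying the contour shifts and the interchange of sum and integral. I do not anticipate any genuine obstacle here.
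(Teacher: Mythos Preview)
Your proposal is correct and follows essentially the same route as the paper's proof: both start from the contour integral of $\Lambda(\tfrac12+s,\xi_\ell)/s$, shift past $s=0$ to extract the residue $\Lambda(\tfrac12,\xi_\ell)$, apply $s\mapsto -s$ together with the functional equation to the remaining integral, and then expand the Dirichlet series term-by-term to recognize $W_K$. The only differences are cosmetic---you begin on $\Real s=2$ rather than $\Real s=1$ and are more explicit about the Stirling/convexity justifications for the contour shifts---and do not amount to a different method.
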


\begin{proof}
Let $\Lambda(s,\xi_{\ell})$ be as in \eqref{lambda_definition} in Theorem \ref{functional_equation_thm}. By a contour shift permitted by the exponential decay of the gamma function along vertical strips, we find
\[
\frac{1}{2 \pi i} \int_{(1)} \Lambda (s+\tfrac12,\xi_{\ell} ) \frac{\dx s}{s} = \Lambda (\tfrac12,\xi_{\ell} ) + \frac{1}{2 \pi i} \int_{(-1)} \Lambda (s + \tfrac12,\xi_{\ell} ) \frac{\dx s}{s}.
\]
On the right side above, applying the substitution $s \mapsto -s$ and the functional equation \eqref{functional_equation} from Theorem \ref{functional_equation_thm} before substituting \eqref{lambda_definition} and integrating term-wise on each side, we obtain
\[
L(\tfrac12,\xi_{\ell}) =  \frac{1}{2 \pi i} \sum_{\mf{k} \neq 0} \frac{\xi_{\ell}(\mf{k}) + \overline{\xi_{\ell}}(\mf{k})}{\sqrt{\mf{N}\mf{k}}} \int_{(1)} \rho_s(|\ell|/2) \left( 2 \pi |D|^{-1/2} \mf{N} \mf{k} \right)^{-s} \frac{\dx s}{s},
\]
as desired.
\end{proof}

The following lemmata characterize the behavior of $\rho_s(x)$ and its derivatives so that we may determine where $V(y,x)$, as defined in \eqref{cutoff_definition}, is ``essentially supported'' and how quickly it decays. The first of these demonstrates that, in suitable bounded vertical strips, $\rho_{\sigma + it}(x)$ has an approximately linear phase in $t$.

\begin{lem}
\label{rho_oscillatory}
Let $x\geqslant 1$ and $s = \sigma + it$ where $|\sigma| \leqslant x/2$. For $\rho_s(x)$ as in \eqref{rho_definition}, we have
\begin{align*}
\rho_s(x) = (x/e)^{\sigma} \left( 1 + \sigma/x \right)^{x + \sigma} \exp \left(F_{x+\sigma}(t) +  i \psi(t) \right) \left( 1 + O\left(x^{-1} \right) \right)
\end{align*}
with
\[
F_{\kappa}(t) = -\int_0^t \arctan \left( u/ \kappa \right) \dx u
\]
and real-valued phase
\[
\psi(t) = t \log \left( x + \sigma \right) + O \left(|t|^3 x^{-2} \right).
\]
\end{lem}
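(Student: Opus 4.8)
The plan is to take logarithms, reduce to Stirling's formula while keeping the half-integer shifts honest, and then recognise the real and imaginary parts of the single complex quantity $(\kappa+it)\log(\kappa+it)$, with $\kappa := x+\sigma$, as the data $F$ and $\psi$ in the statement. I would begin by writing $\log\rho_s(x) = \log\Gamma(x+s+\tfrac12) - \log\Gamma(x+\tfrac12)$ and applying Stirling in the form $\log\Gamma(z) = (z-\tfrac12)\log z - z + \tfrac12\log 2\pi + O(1/|z|)$, valid uniformly in any half-plane $\Real z \geqslant \delta > 0$. Since $|\sigma| \leqslant x/2$ and $x \geqslant 1$, both arguments have real part at least $\tfrac{x}{2}+\tfrac12 \geqslant 1$, so the error terms are $O(1/x)$ \emph{uniformly} in $t$; after the constants $\tfrac12\log 2\pi$ cancel, one is left with $\log\rho_s(x) = (x+s)\log(x+s+\tfrac12) - x\log(x+\tfrac12) - s + O(1/x)$.

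The one real point of care --- and what I expect to be the main obstacle --- is that passing from $x+s+\tfrac12$ to $x+s$ and from $x+\tfrac12$ to $x$ is not innocuous: each substitution produces an $O(1)$ term, namely $(x+s)\log\bigl(1+\tfrac{1/2}{x+s}\bigr) = \tfrac12 + O(1/x)$ and $x\log\bigl(1+\tfrac1{2x}\bigr) = \tfrac12 + O(1/x)$, and it is only because these two halves cancel that one arrives cleanly at $\log\rho_s(x) = (x+s)\log(x+s) - x\log x - s + O(1/x)$. (A cavalier treatment here is genuinely wrong: for instance $F_{\kappa+1/2}(t) - F_\kappa(t) \asymp 1$ when $|t| \asymp \kappa$, so the $\tfrac12$ must not be discarded before the main terms are organised in terms of $\kappa$.) Now $x+s = \kappa + it$, and I would expand
\[
(\kappa+it)\log(\kappa+it) = \kappa\log\kappa + it\log\kappa + (\kappa+it)\log(1+it/\kappa),
\]
using $\log(1+it/\kappa) = \tfrac12\log(1+t^2/\kappa^2) + i\arctan(t/\kappa)$, and read off that $\Real (\kappa+it)\log(1+it/\kappa)$ equals $\tfrac{\kappa}{2}\log(1+t^2/\kappa^2) - t\arctan(t/\kappa)$ while $\Imag (\kappa+it)\log(1+it/\kappa)$ equals $\kappa\arctan(t/\kappa) + \tfrac{t}{2}\log(1+t^2/\kappa^2)$. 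A one-line differentiation check --- the relevant functions vanishing at $t = 0$ --- shows $\tfrac{\dx}{\dx t}\bigl[\tfrac{\kappa}{2}\log(1+t^2/\kappa^2) - t\arctan(t/\kappa)\bigr] = -\arctan(t/\kappa)$, identifying the real part as exactly $F_\kappa(t) = F_{x+\sigma}(t)$, and likewise $\kappa\arctan(t/\kappa) + \tfrac{t}{2}\log(1+t^2/\kappa^2) = t + H_\kappa(t)$ where $H_\kappa(t) := \tfrac12\int_0^t \log(1+v^2/\kappa^2)\,\dx v$.

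Assembling, the stray $+t$ in the imaginary part is cancelled exactly by the $-it$ coming from $-s$, so $\log\rho_s(x) = \bigl[\kappa\log\kappa - x\log x - \sigma\bigr] + F_{x+\sigma}(t) + i\bigl[t\log\kappa + H_\kappa(t)\bigr] + O(1/x)$. Writing $\log\kappa = \log x + \log(1+\sigma/x)$ turns the real bracket into $\sigma\log(x/e) + (x+\sigma)\log(1+\sigma/x)$, which exponentiates to the claimed prefactor; and the elementary bound $0 \leqslant \log(1+u) \leqslant u$ together with $\kappa = x+\sigma \geqslant x/2$ gives $|H_\kappa(t)| \leqslant \tfrac12\int_0^{|t|}(v/\kappa)^2\,\dx v = |t|^3/(6\kappa^2) = O(|t|^3 x^{-2})$, so $\psi(t) := t\log(x+\sigma) + H_\kappa(t)$ is real-valued and of the stated linear-plus-error form. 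Exponentiating $\log\rho_s(x)$ and absorbing $\exp(O(1/x)) = 1 + O(1/x)$ into the multiplicative factor yields the lemma; there is no loss in assuming $x$ sufficiently large where Stirling, the expansions of $\log(1+w)$, and $\exp(O(1/x)) = 1+O(1/x)$ are invoked, since for bounded $x$ the assertion is vacuous up to enlarging the implied constants.
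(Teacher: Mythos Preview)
Your proof is correct and follows essentially the same approach as the paper: apply Stirling, carefully absorb the half-integer shifts (the paper does this in its equation tagged \texttt{expand\_log}), and then separate the resulting complex expression into modulus and phase to recognise $F_{x+\sigma}$ and $\psi$. The only cosmetic difference is that you stay in the logarithm throughout and decompose $(\kappa+it)\log(\kappa+it)$ directly, whereas the paper exponentiates first and splits $(1+s/x)^{x+s}$ into the two factors $(1+s/x)^{x+\sigma}$ and $(1+s/x)^{it}$ before extracting the same real and imaginary parts.
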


\begin{proof}
We begin by recalling Stirling's formula \cite[Theorem 2.3]{SteinShakarchi}, which states that 
\begin{equation*}
\Gamma(s) = \exp (s \log s - s) \left( 2 \pi / s \right)^{1/2} \left( 1 + O \left( |s|^{-1} \right) \right)
\end{equation*}
as $|s| \to \infty$ within any sector of $\C$ which omits the negative real axis. Applying Stirling's formula to \eqref{rho_definition} and noting \eqref{expand_log} shows
\begin{align}
\nonumber
\rho_s(x) & = \frac{(x + s + \tfrac12 )^{x + s}}{(x+ \tfrac12 )^x} e^{-s} \left( 1 + O(x^{-1}) \right) \\
& \label{rho_stirlings} = (x/e)^{s} \left( 1+s/x \right)^{x + s} \left( 1 + O\left( x^{-1} \right) \right)
\end{align}
where the expansion of $\log(1 + z)$ revealed
\begin{align}
    \nonumber (x + s+\tfrac12)^{x+s} & = (x+s)^{x+s} \left( 1 + \tfrac12(x+s)^{-1} \right)^{x+s} \\
    & \label{expand_log} = \sqrt{e} (x+s)^{x+s} \left(1 + O\left(x^{-1}\right) \right),
\end{align}
and one notes that the error term above is uniformly bounded for $|\Real s | \leqslant x/2$. Toward explicating the magnitude and phase of \eqref{rho_stirlings}, we may factor $(1+s/x)^{x+s}$ as the product of
\begin{align}
\nonumber
(1+ s/x)^{x+\sigma} & = (1 + \sigma/x)^{x+\sigma} \left(1 + \tfrac{it}{x+\sigma} \right)^{x+\sigma}\\
& \nonumber = (1+\sigma/x)^{x+\sigma} \exp \left( \tfrac12 (x+\sigma) \log \big( 1 + \left(\tfrac{t}{x + \sigma}\right)^2 \big) + i \phi_1(t) \right),\\
\nonumber & \text{where} \\
\nonumber \phi_1(t) & = (x + \sigma) \arctan \left( \tfrac{t}{x+\sigma} \right) \\
& \label{rho_exp_1} = t + O \left( |t|^3x^{-2} \right),
\end{align}
and
\begin{align}
\nonumber
(1+s/x)^{it} & = (1 + \sigma/x)^{it} \left(1 + \tfrac{it}{x+\sigma} \right)^{it}\\
& \nonumber = \exp \left( -t \arctan \left( \tfrac{t}{x + \sigma} \right) + i \phi_2(t) \right),\\
& \nonumber \text{where} \\
\nonumber \phi_2(t) & = t \log( 1 + \sigma/x ) + \tfrac{t}{2} \log \big( 1 + \left( \tfrac{t}{x + \sigma} \right)^2 \big) \\
& \label{rho_exp_2} = t \log (1 + \sigma/x) + O (|t|^3 x^{-2}),
\end{align}
noting that both $\phi_1$ and $\phi_2$ are real-valued. Inserting each of \eqref{rho_exp_1} and \eqref{rho_exp_2} back into \eqref{rho_stirlings} shows
\begin{align*}
\rho_s(x) = & \left( 1 + O\left( x^{-1} \right) \right) (x/e)^{\sigma} \left( 1 + \sigma/x \right)^{x + \sigma} \\
& \times \exp \left( \tfrac12 (x+\sigma) \log \big( 1 + \left(\tfrac{t}{x + \sigma}\right)^2 \big) -t \arctan \left( \tfrac{t}{x + \sigma} \right) \right)  \\
& \times \exp \left( i \phi_1(t) + i \phi_2(t) + it(-1 + \log x) \right) 
\end{align*}
thus yielding
\[
\rho_s(x) = (x/e)^{\sigma} \left( 1 + \sigma/x \right)^{x + \sigma} \exp \left(\tilde{F}_{x+\sigma}(t) +  i \psi(t) \right) \left( 1 + O\left(x^{-1} \right) \right)
\]
for $\psi(t)$ as defined in the statement of Lemma \ref{rho_oscillatory} and
\begin{equation}
\label{F_tilde}
\tilde{F}_{\kappa}(t) := - t \arctan \left( \tfrac{t}{\kappa} \right) + \tfrac12 \kappa \log \left( 1 + t^2/\kappa^2 \right).
\end{equation}
Note that $\tilde{F}_{\kappa}(0) = 0$ and $\tilde{F}_{\kappa}'(t) = - \arctan(t/\kappa)$, so that an application of the fundamental theorem of calculus shows $F_{\kappa}(t) = \tilde{F}_{\kappa}(t)$, finishing the proof.
\end{proof}

\begin{lem}
\label{logarithmic_derivative_Gamma}
For $n \geqslant 1$ and $\Real z \geqslant 1$, we have
\[
\frac{\dx^n}{\dx z^n} \log \Gamma(z) = \delta_n^1 \log z + O_n\left(\Real(z)^{-\delta_{n}^1-n+1}\right),
\]
where $\delta_n^m$ is the Kronecker delta function.
\end{lem}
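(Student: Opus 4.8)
The plan is to differentiate an \emph{exact} form of Stirling's formula, since one cannot legitimately differentiate through the big-$O$ error term in the asymptotic expansion used in the proof of Lemma~\ref{rho_oscillatory}. I would begin from Binet's representation
\[
\log \Gamma(z) = g(z) + \mu(z), \qquad g(z) := \left(z - \tfrac12\right)\log z - z + \tfrac12 \log 2\pi, \qquad \mu(z) := \int_0^{\infty} \frac{\tfrac12 - \{t\}}{t+z}\,\dx t,
\]
valid for $\Real z > 0$. A single integration by parts, writing $\tfrac12 - \{t\}$ as the derivative of a bounded $1$-periodic function, renders the integral absolutely convergent and, more importantly, legitimizes differentiation under the integral sign for $\Real z \geqslant 1$; this produces $\mu^{(n)}(z) = (-1)^n n! \int_0^\infty (\tfrac12 - \{t\})(t+z)^{-n-1}\,\dx t$ for every $n \geqslant 1$.

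I would then estimate the two pieces separately. Differentiating $g$ is elementary: $g'(z) = \log z - \tfrac{1}{2z}$, and for $n \geqslant 2$ one has $g^{(n)}(z) = (-1)^{n}(n-2)!\,z^{-(n-1)} + O_n\!\left(|z|^{-n}\right)$, with no $\log z$ term surviving. For the remainder, the bound $|\tfrac12 - \{t\}| \leqslant \tfrac12$ together with $|t+z| \geqslant t + \Real z$ gives $|\mu^{(n)}(z)| \leqslant \tfrac{n!}{2}\int_0^\infty (t+\Real z)^{-n-1}\,\dx t = O_n\!\left(\Real(z)^{-n}\right)$ for all $n \geqslant 1$.

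Assembling these: for $n = 1$, $\tfrac{\dx}{\dx z}\log\Gamma(z) = \log z - \tfrac{1}{2z} + \mu'(z)$, and since $\Real z \geqslant 1$ forces $|z| \geqslant \Real z \geqslant 1$, both correction terms are $O(\Real(z)^{-1})$, which is the claim for $n=1$. For $n \geqslant 2$, $\tfrac{\dx^n}{\dx z^n}\log\Gamma(z) = g^{(n)}(z) + \mu^{(n)}(z)$, where $g^{(n)}(z) = O_n(|z|^{-(n-1)}) = O_n(\Real(z)^{-(n-1)})$ and $\mu^{(n)}(z) = O_n(\Real(z)^{-n}) \subseteq O_n(\Real(z)^{-(n-1)})$, again using $\Real z \geqslant 1$; this is the claim for $n \geqslant 2$.

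The one point I would take care over is the passage from estimates naturally expressed with errors $O(|z|^{-k})$ to the stated bounds in $\Real(z)^{-k}$: this is harmless since $\Real z \leqslant |z|$, but it is exactly why I would use the integral representation, which is uniform for \emph{all} $\Real z \geqslant 1$ including near the imaginary axis, rather than a sectorial asymptotic valid only for $|\arg z| \leqslant \pi - \delta$. Justifying the differentiation under the integral sign, after the preparatory integration by parts, is then routine.
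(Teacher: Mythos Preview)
Your argument is correct. Both you and the paper proceed by differentiating an exact integral representation, but you choose a different one: you work from Binet's formula for $\log\Gamma(z)$ with kernel $(t+z)^{-1}$, whereas the paper starts one derivative higher, from the classical formula
\[
\psi(z) = \log z - \frac{1}{2z} - \int_0^{\infty}\Big(\tfrac12 - \tfrac{1}{u} + \tfrac{1}{e^u-1}\Big)e^{-uz}\,\dx u,
\]
and differentiates that $n-1$ further times. The paper's integral is already absolutely convergent, so no preparatory integration by parts is needed; on the other hand, your kernel makes the passage to bounds in $\Real(z)$ slightly more transparent via $|t+z|\geqslant t+\Real z$, while the paper gets the same effect from $|e^{-uz}| = e^{-u\Real z}$. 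Your remark about avoiding sectorial asymptotics is well taken but applies equally to both representations, which are valid throughout $\Real z>0$. In short, the two proofs are minor variants of the same idea.
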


\begin{proof}
We begin with the well-known identity \cite[\S 12.31]{WhittakerWatson}
\begin{equation*}
\frac{\dx}{\dx z} \log \Gamma(z) = \log z - \frac{1}{2z} -  \int_0^{\infty} \left( \frac{1}{2} - \frac{1}{u} + \frac{1}{e^u - 1} \right) e^{-uz} \, \dx u.
\end{equation*}
The case $n=1$ follows from the exponential decay of the integrand above, which begins at $\Real(z)^{-1}$. For $n \geqslant 2$, see that
\[
\frac{\dx^n}{\dx z^n} \log \Gamma (z) = (-1)^n (n-1)! \left( \frac{1}{n z^{n-1}} + \frac{1}{2z^n} + O\left( \Real(z)^{-n} \right) \right),
\]
which finishes the proof.
\end{proof}

\begin{lem}
\label{rho_derivatives_lemma}
Let $x \geqslant 1$ and $s=\sigma + it$ with $|\sigma| \leqslant x/2$. Then, for $n \geqslant 0$,
\begin{equation}
\label{rho_derivatives}
\rho_s^{(n)}(x) = \rho_s(x) \log^n (1 + s/x)\left( 1 + O_n(x^{-1})\right).
\end{equation}
\end{lem}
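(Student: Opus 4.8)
The plan is to reduce the statement to the logarithmic derivatives of $\rho_s$. Since $\rho_s$ is analytic and zero-free near $x$ — indeed $\Real(x+s+\tfrac12)=x+\sigma+\tfrac12\geq\tfrac12(x+1)\geq 1$ because $|\sigma|\leq x/2$ — we may fix a local branch $g$ of $\log\rho_s$ and set $L_k(x):=g^{(k)}(x)$. With $\psi=\Gamma'/\Gamma$ the digamma function, $L_k(x)=\psi^{(k-1)}(x+s+\tfrac12)-\psi^{(k-1)}(x+\tfrac12)$ for $k\geq 1$, and Fa\`a di Bruno's formula for the outer function $\exp$ gives, for each $n\geq 1$,
\[
\rho_s^{(n)}(x)=\rho_s(x)\,Y_n\!\big(L_1(x),L_2(x),\dots,L_n(x)\big),
\]
where $Y_n$ is the $n$-th complete Bell polynomial; its only monomial not divisible by some $L_k$ with $k\geq 2$ is $L_1(x)^n$. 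As the $n=0$ case is the definition, it suffices to prove: (i) $L_1(x)=\log(1+s/x)+O(x^{-1})$; (ii) $L_k(x)=O_k(x^{1-k})$ for $k\geq 2$; and (iii) every remaining monomial of $Y_n(L_1,\dots,L_n)$ is $O_n(x^{-1})$ times $L_1(x)^n$.

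For (i) and (ii) I would apply Lemma \ref{logarithmic_derivative_Gamma} to $z=x+s+\tfrac12$ and to $z=x+\tfrac12$, noting that $\Real z\asymp x$ in both cases. For $k\geq 2$ the Kronecker delta vanishes, so $\tfrac{\dx^k}{\dx z^k}\log\Gamma(z)=O_k(\Real(z)^{1-k})$ and hence $L_k(x)=O_k(x^{1-k})$. For $k=1$ the lemma gives $L_1(x)=\log(x+s+\tfrac12)-\log(x+\tfrac12)+O(x^{-1})=\log\!\big(1+\tfrac{s}{x+1/2}\big)+O(x^{-1})$; since $\tfrac{s}{x+1/2}=\tfrac sx\,(1+O(x^{-1}))$ and $\Real(1+s/x)=1+\sigma/x\geq\tfrac12$, the mean value theorem applied to $\log(1+\cdot)$ shows $\log\!\big(1+\tfrac{s}{x+1/2}\big)=\log(1+s/x)+O(x^{-1})$, proving (i).

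For (iii) I would sort the monomials of $Y_n$ by the underlying partition of $n$. If a partition has $m_1$ parts equal to $1$ together with parts of size $\geq 2$ of total size $p=n-m_1$ distributed among $q$ parts, then the corresponding monomial is a constant multiple of $L_1(x)^{n-p}\prod_{i\geq 2}L_i(x)^{m_i}$, which by (ii) equals $L_1(x)^{n-p}\cdot O\!\big(x^{-\sum_{i\geq 2}(i-1)m_i}\big)=L_1(x)^{n-p}\cdot O(x^{-(p-q)})$. When $p\geq 2$ one has $q\leq p/2$, hence $p-q\geq 1$, so relative to the leading monomial $L_1(x)^n$ the ratio is $O\!\big(x^{-(p-q)}L_1(x)^{-p}\big)=O(x^{-1})$ provided $L_1(x)$ is bounded away from $0$. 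This last comparison is the crux, and the only delicate step: it hinges on $L_1(x)=\log(1+s/x)+O(x^{-1})$ not being anomalously small, i.e. on $|s|$ being at least of order $x$ in the regime where the lemma is used, so that $|\log(1+s/x)|\asymp 1$ and the powers $L_1(x)^{-p}$ stay $O(1)$ — this is also what lets one pass from $L_1(x)^n=(\log(1+s/x)+O(x^{-1}))^n$ to $\log^n(1+s/x)(1+O_n(x^{-1}))$. Granting it, summing the finitely many monomials gives $\rho_s^{(n)}(x)=\rho_s(x)L_1(x)^n(1+O_n(x^{-1}))=\rho_s(x)\log^n(1+s/x)(1+O_n(x^{-1}))$. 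The sole piece of genuine work is this bookkeeping of Bell-polynomial monomials and their powers of $x$; everything else is a direct substitution into Lemma \ref{logarithmic_derivative_Gamma}.
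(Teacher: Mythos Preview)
Your approach is essentially the paper's, packaged differently: the paper proceeds by induction, writing $\rho_s^{(n)}=\big(\rho_s\,(\log\rho_s)'\big)^{(n-1)}$, expanding by Leibniz, and using the inductive hypothesis together with Lemma~\ref{logarithmic_derivative_Gamma} to argue that the $m=n-1$ term dominates. Unfolding that recursion is precisely Fa\`a di Bruno, and your leading monomial $L_1^n$ corresponds to the chain of $m=n-1$ choices at every stage.

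You are right to flag step (iii) as delicate, and in fact the paper's proof glosses over the very same point. Converting the additive error in $L_1(x)=\log(1+s/x)+O(x^{-1})$ into a multiplicative one, and absorbing the lower-order Bell monomials into $L_1(x)^n(1+O_n(x^{-1}))$, both require $|\log(1+s/x)|$ to be bounded away from zero, which the stated hypotheses $x\geq 1$, $|\sigma|\leq x/2$ do not guarantee. Indeed, for $s=1$ and $n=2$ one has $\rho_1(x)=x+\tfrac12$, hence $\rho_1''(x)=0$, whereas $\rho_1(x)\log^2(1+1/x)\sim x^{-1}$, so the multiplicative form of the conclusion is literally false there. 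The paper's inductive step inherits the identical issue when it passes silently from $\log(1+s/x)+O(x^{-1})$ to $\log(1+s/x)(1+O(x^{-1}))$. In the paper's sole application (the proof of Lemma~\ref{cutoff_lem}) the estimate is invoked only as an upper bound inside integrals, where this defect is harmless; but as a standalone statement the lemma needs either the extra hypothesis you propose or an additive error in place of the multiplicative one. Your proof is therefore at least as complete as the paper's, and your diagnosis of the soft spot is accurate.
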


\begin{proof}
The case $n=0$ is clear and the case $n=1$ follows from the logarithmic derivative
\[
\frac{\dx}{\dx x} \rho_s(x) = \rho_s(x) \frac{\dx}{\dx x} \log \rho_s(x)
\]
and Lemma \ref{logarithmic_derivative_Gamma}. Proceeding inductively for $n \geqslant 2$, note that
\begin{align*}
    \rho_s^{(n)}(x) & = \frac{\dx^{n-1}}{\dx x^{n-1}} \left( \rho_s(x) \frac{\dx}{\dx x} \log \rho_s(x) \right)\\
    & = \sum_{m=0}^{n-1} \binom{n-1}{m} \rho_s^{(m)}(x) \frac{\dx^{n-m}}{\dx x^{n-m}} \log \rho_s(x).
\end{align*}
By our inductive hypothesis \eqref{rho_derivatives} for $m < n$ and Lemma \ref{logarithmic_derivative_Gamma}, we note the main contribution from $m = n-1$ and conclude the proof.
\end{proof}

Having established Lemmata \ref{rho_oscillatory} and \ref{rho_derivatives_lemma}, we are now equipped to understand the general behavior of $V(y,x)$ and its derivatives in the approximate functional equation of $L(\tfrac12,\xi_{\ell})$ as introduced in Theorem \ref{AFE}.

\begin{lem}
\label{cutoff_lem}
Let $y \geqslant 1$, $x \geqslant 3$, $n \geqslant 0$, and $V(y,x)$ be as in \eqref{cutoff_definition}. Then
\begin{equation}
\label{cutoff_bounds}
\frac{\partial^n}{\partial x^n} V(y,x) = \delta_n^0 + O_n \Big( x^{-n/2} (y/x)^{\sqrt{x}} \Big) \quad \text{and} \quad \frac{\partial^n}{\partial x^n} V(y,x) \ll_n x^{-n/2} (x/y)^{\sqrt{x}},
\end{equation}
where $\delta_n^m$ is the Kronecker delta function.
\end{lem}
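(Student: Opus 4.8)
The plan is to work directly from the Mellin--Barnes representation \eqref{cutoff_definition}. First I would differentiate under the integral sign, which is legitimate because $\rho_s^{(n)}(x)$ decays exponentially in $\Imag s$ along vertical lines (Stirling), obtaining $\partial_x^n V(y,x) = \frac{1}{2\pi i}\int_{(1)}\rho_s^{(n)}(x)\,y^{-s}\,\frac{\dx s}{s}$. The key structural point is that $\rho_0(x) = \Gamma(x+\tfrac12)/\Gamma(x+\tfrac12) \equiv 1$, so $\rho_0^{(n)}(x) = \delta_n^0$; hence the integrand is holomorphic at $s=0$ when $n\geqslant 1$ and has a simple pole there of residue $\delta_n^0$ when $n=0$. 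To prove the first estimate in \eqref{cutoff_bounds} I would push the contour leftward to $\Real s = -\sqrt x$, collecting exactly the residue $\delta_n^0$; to prove the second I would push rightward to $\Real s = \sqrt x$, crossing no pole. In both cases the connecting horizontal segments vanish by the Gamma decay in $\Imag s$.

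Next I would estimate the two shifted integrals using the preparatory lemmas. On the line $\Real s = \pm\sqrt x$ (where $\sqrt x \leqslant x/2$, i.e.\ $x\geqslant 4$; the bounded range $3\leqslant x < 4$ is handled separately since then $x$, $\sqrt x$, and $x^{-n/2}$ are all of bounded order and a crude direct Stirling estimate on the same shifted line suffices), Lemma \ref{rho_derivatives_lemma} gives $|\rho_s^{(n)}(x)| \ll |\rho_s(x)|\,|\log(1+s/x)|^n$ and Lemma \ref{rho_oscillatory} gives $|\rho_s(x)| \ll (x/e)^{\pm\sqrt x}(1 \pm 1/\sqrt x)^{x\pm\sqrt x}\,e^{F_{x\pm\sqrt x}(t)}$. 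A Taylor expansion of $\log(1 \pm 1/\sqrt x)$ collapses the $t$-independent prefactor to $(x/e)^{\pm\sqrt x}(1 \pm 1/\sqrt x)^{x\pm\sqrt x} = x^{\pm\sqrt x}\,e^{1/2+O(x^{-1/2})} \asymp x^{\pm\sqrt x}$. On the central range $|t|\leqslant\sqrt x$ one has $|\log(1+s/x)|\ll x^{-1/2}$, producing the factor $x^{-n/2}$, while for $|t|>\sqrt x$ the Gaussian-type decay of $e^{F_\kappa(t)}$ with $\kappa\asymp x$ dominates the at-worst-logarithmic growth of $|\log(1+s/x)|^n$; combined with $|s|\geqslant\sqrt x$ on the line, the remaining integral $\int e^{F_{x\pm\sqrt x}(t)}\,\dx t/|s|$ is $O(1)$. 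Since $|y^{-s}| = y^{\mp\sqrt x}$, multiplying the pieces yields the bounds $x^{-n/2}(y/x)^{\sqrt x}$ and $x^{-n/2}(x/y)^{\sqrt x}$.

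The main obstacle I anticipate is the bookkeeping around the prefactor: one must check that the constant-order factor $e^{1/2}$ and all $O(x^{-1/2})$-type corrections are genuinely absorbed into the claimed $O_n$, and confirm that the choice $\sigma = \pm\sqrt x$ --- which is convenient rather than optimal, the true saddle of $\rho_\sigma(x)y^{-\sigma}$ sitting near $\sigma = y-x$, typically outside the admissible strip $|\sigma|\leqslant x/2$ --- still delivers the clean powers $x^{\pm\sqrt x}$ after the expansion. A secondary nuisance is reconciling the hypothesis $x\geqslant 3$ with the constraint $\sqrt x\leqslant x/2$ required to invoke Lemmas \ref{rho_oscillatory} and \ref{rho_derivatives_lemma} on the shifted line, which forces the separate, easy treatment of $3\leqslant x<4$ noted above.
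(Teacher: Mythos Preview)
Your proposal is correct and follows essentially the same route as the paper: shift the contour to $\Real s = \pm\sqrt{x}$, collect the residue $\delta_n^0$, and bound the shifted integral using Lemmas~\ref{rho_oscillatory} and~\ref{rho_derivatives_lemma} together with the simplification $(x/e)^{\sigma}(1+\sigma/x)^{x+\sigma}\asymp x^{\sigma}$. The only cosmetic differences are that the paper shifts first and then differentiates (rather than the reverse), and that it splits the $t$-integral at the threshold $|t|=x^{1/2+\varepsilon}$ with slightly more explicit bookkeeping on the tails; your observation about the range $3\leqslant x<4$ is a fair point that the paper glosses over.
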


\begin{proof}
Let $|\sigma| = x^{1/2}$. Shifting the line of integration in \eqref{cutoff_definition} to $(\sigma)$ (perhaps picking up contribution from the pole at $s=0$), differentiating $n$ times with respect to $x$, and substituting $s = \sigma + it$ yields
\begin{equation}
\label{cutoff_contour_shift}
\frac{\partial^n}{\partial x^n} V(y,x) = \delta_n^0 \Delta_{\sigma} + \frac{y^{-\sigma}}{2 \pi \sigma} \int_{-\infty}^{\infty} \rho^{(n)}_{\sigma + it}(x) y^{-it} G(\sigma,t) \, \dx t,
\end{equation}
where $\Delta_{\sigma} \in \{0,1\}$ indicates whether $\sigma$ is negative and
\[
G(\sigma,t) = \frac{\sigma}{\sigma + it}.
\]
We will start by bounding the tails of the integral in \eqref{cutoff_contour_shift}, corresponding to the values $|t| \geqslant x^{1/2 + \varepsilon}$. Exercising Lemmata \ref{rho_oscillatory} and \ref{rho_derivatives_lemma} with $(1+\sigma/x)^{x + \sigma} = \exp(\sigma + O(1))$ from our choice $|\sigma| = x^{1/2}$ shows these tails are bounded by an absolute constant multiple of
\begin{align*}
\int_{|t| \geqslant x^{1/2 + \varepsilon}} \left|  \rho_{\sigma + it}^{(n)}(x) \right| \dx t & \ll_n \int_{|t| \geqslant x^{1/2 + \varepsilon}} \left|  \rho_{\sigma + it}(x) \log^n \left(1 +  \tfrac{\sigma + it}{x}\right)\right| \dx t \\
& \ll x^{\sigma - n} \int_{x^{1/2+\varepsilon}}^{\infty} \exp \left( n \log t - \int_0^{t} \arctan \left( \tfrac{u}{x + \sigma} \right) \ \dx u \right) \dx t.
\end{align*}
Noting that $\arctan(v) \geqslant v \pi/4$ for $0 \leqslant v \leqslant 1$ and $\arctan(v) \geqslant \pi/4$ for $v \geqslant 1$ allows us to bound the right hand side above by
\begin{align*}
 \leqslant x^{\sigma-n} & \int_{x^{1/2 + \varepsilon}}^{x + \sigma} \exp \left( \tfrac{-t^2 \pi }{8x}(1+o_n(1)) \right) \dx t \\
 & + x^{\sigma-n} \exp \left( \tfrac{\pi}{8} (x + \sigma) \right) \int_{x + \sigma}^{\infty} \exp \left( -\tfrac{\pi}{4} t (1+o_n(1)) \right) \dx t 
\end{align*}
where utilizing the bound $\int_c^{\infty} e^{-u^2} \dx u \ll e^{-c^2}/c$ then shows that the main contribution above comes from the left summand whose integral is bounded by
\[
\ll_n \int_{x^{1/2 + \varepsilon}}^{\infty} \exp \left( \frac{-t^2 \pi}{16x} \right) \, \dx t \ll x^{1/2 - \varepsilon} \exp  ( - x^{\varepsilon}  \pi/16 ).
\]
We therefore find that the integral in \eqref{cutoff_contour_shift} becomes
\begin{equation}
\label{cutoff_main}
O_n\left(x^{\sigma - n + 1/2 - \varepsilon} \exp(-x^{\varepsilon} \pi/ 16) \right) + \int_{-x^{1/2 + \varepsilon}}^{x^{1/2 + \varepsilon}} \rho^{(n)}_{\sigma + it}(x) y^{-it} G(\sigma,t) \dx t.
\end{equation}
Another application of lemmata \ref{rho_oscillatory} and \ref{rho_derivatives_lemma}, keeping in mind $(1+\sigma/x)^{x + \sigma}$ is equal to $\exp(\sigma + O(1))$ by our choice $|\sigma| = x^{1/2}$, shows that the integral in \eqref{cutoff_main} is
\begin{equation}
\label{center_integral}
\ll_n x^{\sigma} \int_{-x^{1/2+\varepsilon}}^{x^{1/2+\varepsilon}} \exp \left( F_{x + \sigma}(t) \right) \left| \log^n \left(1 +  \tfrac{\sigma + it}{x}\right) \right| \, \dx t
\end{equation}
where, by \eqref{F_tilde}, we may expand
\begin{align}
\nonumber F_{x + \sigma}(t) &= -t \arctan \left( \tfrac{t}{x+\sigma} \right) + \tfrac12 (x+ \sigma) \log \left( 1 + \tfrac{t^2}{(x + \sigma)^2} \right)\\
& \label{F_expand} = -\frac{t^2}{2 x}\left( 1 + O(x^{-1/2} + t^2 x^{-2}) \right).
\end{align}
Inserting \eqref{F_expand} into \eqref{center_integral} yields a value
\[
\ll x^{\sigma} \int_{-x^{1/2 + \varepsilon}}^{x^{1/2 + \varepsilon}} \exp(-\tfrac14 t^2/x) \left| \log^n \left(1 +  \tfrac{\sigma + it}{x}\right) \right| \, \dx t \ll x^{\sigma - n + 1/2} \sigma^{n} 
\]
which can be used as an upper bound for the integral in \eqref{cutoff_main} resulting, ultimately, in a bound for \eqref{cutoff_contour_shift} of the form
\begin{align*}
\frac{\partial^n}{\partial x^n} V(y,x) & = \delta_n^0 \Delta_{\sigma} + O_n \left( \frac{y^{-\sigma}}{2 \pi \sigma} \left(  x^{\sigma - n + 1/2} \sigma^{n} \right) \right)\\
& = \delta_n^0 \Delta_{\sigma} + O_n \left( x^{-n/2}(x/y)^{\sigma} \right)
\end{align*}
as desired.
\end{proof}

It will be convenient to produce lower bounds on sums of products involving factors of $W_K$ by truncating the sums, where all other factors of the summands are non-negative. To do so, we must prove that $V(y,x)$ is also non-negative for the input $(y,x)$ under consideration. Note that the bounds in \eqref{cutoff_bounds} are insufficient here. We therefore prove the following lemma which happens to also show that $V(y,x)$ can be expressed as the ratio of an incomplete Gamma function to a complete Gamma function.

\begin{lem}
\label{cutoff_positive}
If $y > 0$ and $x > 0$, then $V(y,x) \in (0,1)$.
\end{lem}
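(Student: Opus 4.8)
The plan is to prove the explicit identity
\[
V(y,x) = \frac{\Gamma\!\left(x+\tfrac12,\,y\right)}{\Gamma\!\left(x+\tfrac12\right)},
\qquad \text{where } \Gamma(a,y) := \int_y^{\infty} t^{a-1} e^{-t} \, \dx t
\]
is the upper incomplete Gamma function; this clearly proves the lemma, since for $x>0$ the integrand $t^{x-1/2}e^{-t}$ is strictly positive on $(0,\infty)$, so $0 < \Gamma(x+\tfrac12,y) < \int_0^{\infty} t^{x-1/2}e^{-t}\,\dx t = \Gamma(x+\tfrac12)$ whenever $y>0$, i.e.\ $V(y,x)\in(0,1)$. (It is also precisely the ``ratio of an incomplete Gamma function to a complete Gamma function'' promised in the discussion preceding the statement.)

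To establish the identity I would differentiate \eqref{cutoff_definition} under the integral sign. This is legitimate because $\int_{(1)}|\rho_s(x)|\,|\dx s|<\infty$ by the exponential decay of $\Gamma$ along vertical lines (Stirling's formula), with a bound locally uniform in $y\in(0,\infty)$. Differentiating in $y$ cancels the $1/s$ factor and gives $\tfrac{\partial}{\partial y}V(y,x) = -\tfrac{1}{y\,\Gamma(x+1/2)}\cdot\tfrac{1}{2\pi i}\int_{(1)}\Gamma(x+s+\tfrac12)\,y^{-s}\,\dx s$. After the substitution $w = s+x+\tfrac12$ and the standard Mellin pair $e^{-y}=\tfrac{1}{2\pi i}\int_{(c)}\Gamma(w)\,y^{-w}\,\dx w$ (valid for any $c>0$; here $c=x+\tfrac32>0$), the inner integral equals $y^{x+1/2}e^{-y}$, so $\tfrac{\partial}{\partial y}V(y,x) = -y^{x-1/2}e^{-y}/\Gamma(x+\tfrac12)$. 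Separately, shifting the contour in \eqref{cutoff_definition} rightward to $\Real s=\sigma$ for any fixed $\sigma>1$ crosses no poles and gives $|V(y,x)|\leqslant C(\sigma,x)\,y^{-\sigma}$, so $V(y,x)\to0$ as $y\to\infty$. Integrating $\tfrac{\partial}{\partial y}V$ from $y$ to $\infty$ then recovers $V(y,x)=\int_y^{\infty}t^{x-1/2}e^{-t}\,\dx t\,/\,\Gamma(x+\tfrac12)$, which is the claimed formula. Equivalently — and this is the shorter write-up I might prefer — one recognizes \eqref{cutoff_definition} directly as Mellin inversion applied to $y\mapsto\Gamma(x+\tfrac12,y)$, whose Mellin transform is $\Gamma(x+s+\tfrac12)/s$ for $\Real s>0$ by a single integration by parts (using $\tfrac{\dx}{\dx y}\Gamma(a,y)=-y^{a-1}e^{-y}$, the boundary terms vanishing because $\Real s>0$ and $\Gamma(a,0)=\Gamma(a)<\infty$ for $a=x+\tfrac12>0$).

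The substance of the argument is entirely bookkeeping: verifying absolute convergence of the relevant vertical integrals (immediate from Stirling), justifying differentiation under the integral sign and the change of variables $w=s+x+\tfrac12$, and — in the alternative route — checking that the boundary terms in the integration by parts vanish and that the Mellin transform is absolutely integrable on $\Real s=1$ so that inversion holds pointwise. None of these is a genuine obstacle; the only idea required is the observation that the Mellin--Barnes integral defining the smoothed cutoff $V(y,x)$ is exactly the normalized upper incomplete Gamma function $\Gamma(x+\tfrac12,y)/\Gamma(x+\tfrac12)$, after which positivity is trivial.
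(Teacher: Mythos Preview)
Your proof is correct. Both your argument and the paper's ultimately identify $V(y,x)$ with the normalized upper incomplete Gamma function $\Gamma(x+\tfrac12,y)/\Gamma(x+\tfrac12)$, from which the conclusion $V(y,x)\in(0,1)$ is immediate. The routes differ, however. The paper shifts the contour in \eqref{cutoff_definition} onto the imaginary axis (indented around the origin by a small semicircle), picks up the half-residue $1/2$ from the semicircle, and evaluates the remaining imaginary-axis integral as
\[
\tfrac{1}{2}\,\Gamma(x+\tfrac12)^{-1}\int_0^{\infty} u^{x-1/2}e^{-u}\,\sgn\log(u/y)\,\dx u,
\]
which is real and strictly smaller than $1/2$ in absolute value; summing gives $V(y,x)\in(0,1)$. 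Unwinding that integral one sees it equals $\bigl(2\Gamma(x+\tfrac12,y)-\Gamma(x+\tfrac12)\bigr)/\bigl(2\Gamma(x+\tfrac12)\bigr)$, so the identity you prove is present implicitly. Your approach---differentiating in $y$ to cancel the $1/s$ and then invoking the Mellin pair $e^{-y}=\tfrac{1}{2\pi i}\int_{(c)}\Gamma(w)y^{-w}\,\dx w$, or equivalently recognizing $\Gamma(a+s)/s$ as the Mellin transform of $\Gamma(a,y)$---is more direct and makes the incomplete-Gamma identity explicit from the outset. The paper's route has the minor advantage of avoiding any limit argument at $y\to\infty$ (everything happens at a fixed $y$), while yours is the textbook computation and arguably the cleaner write-up.
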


\begin{proof}
We begin by shifting the contour in the definition of $V(y,x)$ to the piece-wise contour $\gamma_{\varepsilon}^- + \gamma_{\varepsilon} + \gamma_{\varepsilon}^+$, where the $\gamma_{\varepsilon}^{\pm}$ sit along $\pm i[\varepsilon, \infty)$ and $\gamma_{\varepsilon}$ forms the right half of the circle with radius $\varepsilon$ centered at the origin. Define
\begin{equation*}
I_{\varepsilon}^{\pm} = \frac{1}{2 \pi i} \int_{\gamma_{\varepsilon}^{\pm}} \rho_s(x) y^{-s} \frac{\dx s}{s}
\end{equation*}
and $I_{\varepsilon}$ similarly for $\gamma_{\varepsilon}$, where we denote the limit as $\varepsilon \to 0^+$ of each $I_{\varepsilon}^{\pm}$ and $I_{\varepsilon}$ as $I^{\pm}$ and $I$, respectively. Note that $I = 1/2$ and
\begin{align*}
    I^- + I^+ & = \frac{1}{\pi \Gamma(x + \tfrac12)} \int_0^{\infty} \Imag \Gamma (x + \tfrac12 + it) y^{-it} \frac{\dx t}{t} \\
    &= \frac12 \Gamma(x+\tfrac12)^{-1} \int_0^{\infty} u^{x-1/2} e^{-u} \, \sgn \log (u/y)  \, \dx u
\end{align*}
which implies that $I^- + I^+ \in \R$ is strictly less than $1/2$ in absolute value. The statement of the lemma follows.
\end{proof}

\section{The setup}
\label{setup}
Fix an imaginary quadratic number field $K$ and let $X$ be a large positive value. We begin by introducing our notion of a resonator
\begin{equation}
\label{resonator_defn}
R(\xi_{\ell}) = \sum_{\mf{N} \mf{a} \leqslant N} \xi_{\ell}(\mf{a}) r(\mf{a})
\end{equation}
where $\xi_{\ell} \in \mf{F}_{\ell}$ is a Hecke character as defined in \eqref{characters_definition} and the $r(\mf{a})$ are non-negative resonator coefficients. The length of our resonator will be set by a constraint involving the frequencies of the $\xi_{\ell}$ under consideration, as is expected within the resonator method, where the goal is to build $R(\xi_{\ell})$ to resonate with the oscillatory function $L(\tfrac12 ,\xi_{\ell})$ for $\xi_{\ell}$ varying among the $\mf{F}_{\ell}$ with $X \leqslant \ell \leqslant 2X$. One then expects large central values to be pronounced when examining $L(\tfrac12,\xi_{\ell})|R(\xi_{\ell})|^2$. To make this idea precise, let $0 \leqslant \Phi(x) \leqslant 1$ be a smooth weight function supported in $[1,2]$ with uniformly bounded derivatives and note that
\begin{equation}
\label{ratio}
\max_{\substack{X \leqslant \ell \leqslant 2X \\ \xi_{\ell} \in \mf{F}_{\ell}}} \left| L(\tfrac12, \xi_{\ell}) \right| \geqslant \frac{\big| \sum_{\ell} \Phi(\ell/X) \sum_{\xi_{\ell} \in \mf{F}_{\ell}} L(\tfrac12, \xi_{\ell}) |R(\xi_{\ell})|^2  \big|}{\sum_{\ell} \Phi(\ell/X) \sum_{\xi_{\ell} \in \mf{F}_{\ell}} |R(\xi_{\ell})|^2}.
\end{equation}

We make provisions for the resonator coefficients before an explicit choice is later made. Let $r \geqslant 0$ be multiplicative, $r(\mf{p})=0$ on prime ideals $\mf{p}$ that are not generated by a factor of a split rational prime, and for all prime ideals $\mf{p}$, $r(\mf{p}) = r(\overline{\mf{p}})$ and $r(\mf{p}^m) = 0$ for $m \geqslant 2$. These choices do not affect the quality of our result and serve to simplify many calculations that are encountered.

\subsection{Bounding the denominator}
\label{bounding_denominator}
To start, we examine the denominator of the ratio in \eqref{ratio}, upon which we wish to produce an upper bound. By orthogonality of the class group characters and the definition of $\gamma_{\mf{n}}$, both discussed in Section \ref{Hecke_characters}, we have
\begin{align}
    \nonumber \sum_{\omega_K \mid \ell} \Phi(\ell/X) \sum_{\xi_{\ell} \in \mf{F}_{\ell}} |R(\xi_{\ell})|^2 & = \sum_{\omega_K \mid \ell} \Phi(\ell/X) \sum_{\mf{N}\mf{a},\mf{N}\mf{b} \leqslant N} r(\mf{a}) r(\mf{b}) \sum_{\xi_{\ell} \in \mf{F}_{\ell}} \xi_{\ell}(\mf{a}\overline{\mf{b}})\\
    & \label{denominator_swaporder} = h_K \sum_{\substack{ \mf{N}\mf{a},\mf{N}\mf{b} \leqslant N \\ \mf{a}\overline{\mf{b}} \in P}} r(\mf{a}) r(\mf{b}) \sum_{k} \Phi(\omega_K k/X) e( \omega_K k\arg \gamma_{\mf{a} \overline{\mf{b}}} / 2 \pi),
\end{align}
where $P$ denotes the set of principal ideals of $\OO_K$. Application of Poisson summation with a change of variable shows that the above equals
\begin{align}
\label{denominator_poisson}
h_K \omega_K^{-1} X \sum_{\substack{\mf{N}\mf{a},\mf{N}\mf{b} \leqslant N \\ \mf{a}\overline{\mf{b}} \in P}} r(\mf{a}) r(\mf{b}) \sum_j \hat{\Phi} \left(X (\omega_K^{-1} j - \arg \gamma_{\mf{a} \overline{\mf{b}}} / 2\pi) \right).
\end{align}
The rapid decay of $\hat{\Phi}$ allows us to profitably split this sum into two cases according to whether $\left| \arg \gamma_{\mf{a}\overline{\mf{b}}} \right|$ passes the threshold $X^{-1 + \varepsilon}$. Note the series expansion of $\sin \arg \gamma_{\mf{a}\overline{\mf{b}}}$ in the case $\left| \arg \gamma_{\mf{a} \overline{\mf{b}}} \right| < X^{-1+\varepsilon}$ shows that
\begin{align}
\label{imaginary_bound}
\left|\Imag \gamma_{\mf{a} \overline{\mf{b}}} \right| \leqslant N\left(X^{-1+\varepsilon} + O\left(X^{-2}\right) \right).
\end{align}
It is here that a constraint on the length of the resonator appears should we wish to avoid off-diagonal contribution in \eqref{denominator_poisson}; later, though, we further restrict the length. Let $P_0$ denote the set of principal ideals generated by rational numbers and $P'$ be the set of ideals which contain no $P_0$-factors. Enforcing $N \leqslant X^{1 - \varepsilon}$ guarantees $\left|\Imag \gamma_{\mf{a} \overline{\mf{b}}} \right| < 1$ by \eqref{imaginary_bound}, so that $\mf{a} \overline{\mf{b}} \in P_0$ for $X$ sufficiently large relative to $\varepsilon$.

By considering the prime factorization of $\mf{a}$ and $\mf{b}$, we may write
\begin{equation}
\label{factor_ideals}
    \mf{a} = \mf{a}_0 \mf{a}' \quad \text{and} \quad \mf{b} = \mf{b}_0 \mf{b}'
\end{equation}
uniquely for $\mf{a}_0, \mf{b}_0 \in P_0$ and $\mf{a'}, \mf{b}' \in P'$ to find that $\mf{a}\overline{\mf{b}} \in P_0$ if and only if $\mf{a}' = \mf{b}'$. By the bound $\hat{\Phi}(y) \ll_n y^{-n}$ (derived from repeated integration by parts) and the deduction above, the total contribution to the sum in \eqref{denominator_poisson} over all such pairs $(\mf{a},\mf{b})$ is
\begin{align}
    \nonumber \sum_{\substack{\mf{N}\mf{a}, \mf{N}\mf{b} \leqslant N \\ \mf{a}\overline{\mf{b}} \in P_0}} r(\mf{a})r(\mf{b}) \big(\hat{\Phi}(0) + O(X^{-1})\big)
    & \ll \hat{\Phi}(0) \sum_{\mf{a}_0,\mf{b}_0 \in P_0} r(\mf{a}_0)r(\mf{b}_0) \sum_{\substack{\mf{a}' \in P' \\ (\mf{a}',\mf{a}_0\mf{b}_0) = 1}} r(\mf{a}')^2 \\
    & \label{denominator_diagonal} = \hat{\Phi}(0) \sideset{}{'} \prod_{\mf{p}} \left( 1 + 4r(\mf{p})^2 + r(\mf{p})^4 \right),
\end{align}
where the primed product above is indexed over arbitrary representatives of conjugate pairs of factors of split rational primes. Again wielding the above bound on $\hat{\Phi}(y)$ and exercising the Cauchy--Schwarz inequality, we find that the off-diagonal contribution (\emph{i.e.} when $\left| \arg \gamma_{\mf{a}\overline{\mf{b}}} \right| \geqslant X^{-1 + \varepsilon}$) to the sum in \eqref{denominator_poisson} is
\begin{equation}
\label{denominator_off-diagonal}
    \ll_{m} X^{-\varepsilon m} N \sum_{\mf{a} \neq 0} r(\mf{a})^2 \leqslant X^{1-\varepsilon m} \prod_{\mf{p}} (1 + r(\mf{p})^2).
\end{equation}
Choosing $m$ sufficiently large relative to $\varepsilon$ allows the quantity in \eqref{denominator_diagonal} to dominate that in \eqref{denominator_off-diagonal}. Combining this observation with \eqref{denominator_swaporder} and \eqref{denominator_poisson} yields
\begin{equation}
\label{denominator}
    \sum_{\ell} \Phi(\ell/X) \sum_{\xi_{\ell} \in \mf{F}_{\ell}} |R(\xi_{\ell})|^2 \ll X \hat{\Phi}(0) \sideset{}{'} \prod_{\mf{p}} \left( 1 + 4r(\mf{p})^2 + r(\mf{p})^4 \right).
\end{equation}

\section{Resonator application}
\label{resonator_application}
Application of Theorem \ref{AFE} to the numerator in \eqref{ratio} produces
\begin{align}
\label{numerator_AFE}
    \sum_{\ell} \Phi(\ell/X) \sum_{\xi_{\ell} \in \mf{F}_{\ell}}  |R(\xi_{\ell})|^2 \sum_{\mf{k}} \frac{\xi_{\ell}(\mf{k}) + \overline{\xi_{\ell}}(\mf{k})}{\sqrt{\mf{N}\mf{k}}} W_K \big( \mf{N} \mf{k}, |\ell| \big)
\end{align}
upon which we wish to place a lower bound. Our analysis of the sum over $\xi_{\ell}(\mf{k})$ above will end up having built-in symmetry with the dual sum over $\overline{\xi_{\ell}}(\mf{k})$. Further, the main contribution to the sum over $\xi_{\ell}(\mf{k})$, hence also $\overline{\xi_{\ell}}(\mf{k})$, will be real-valued. We thus explicitly develop our argument first (and only) for the sum in \eqref{numerator_AFE} over $\xi_{\ell}(\mf{k})$. With this in mind, we proceed as in \eqref{denominator_swaporder} and \eqref{denominator_poisson} -- exchanging order of summation and using Poisson summation -- to obtain
\begin{align}
    & \nonumber h_K \sum_{\substack{\mf{N} \mf{a}, \mf{N} \mf{b} \leqslant N \\ \mf{k} \mf{a} \overline{\mf{b}} \in P }} \frac{r(\mf{a}) r(\mf{b})}{\sqrt{\mf{N} \mf{k}}} \sum_{k} \Phi(\omega_K k /X) W_K(\mf{N} \mf{k}, \omega_K k) e\left( \omega_K k \arg \gamma_{\mf{a} \overline{\mf{b}} \mf{k}} / 2 \pi \right)\\
    & \label{numerator_poisson} = h_K \omega_K^{-1} X \sum_{\substack{\mf{N} \mf{a}, \mf{N} \mf{b} \leqslant N \\ \mf{k} \mf{a} \overline{\mf{b}} \in P }} \frac{r(\mf{a}) r(\mf{b})}{\sqrt{\mf{N} \mf{k}}} \sum_{j} \mathcal{F}_u [\Phi(u) W_K(\mf{N} \mf{k}, u X)]\left( X \left( \omega_K^{-1}j - \arg \gamma_{\mf{a} \overline{\mf{b}} \mf{k}} / 2 \pi \right) \right)
\end{align}
where $\mathcal{F}_u[f]$ denotes the Fourier transform of $f(u)$. Over the course of this section, we will establish a lower bound on \eqref{numerator_AFE} by extracting the contribution to \eqref{numerator_poisson} from the case
\begin{equation*}
    \mf{k}\mf{a}\overline{\mf{b}} \in P_0 \quad  \text{and} \quad \mf{N}\mf{k} \ll X.
\end{equation*}
For $\mf{k}\mf{a}\overline{\mf{b}} \in P \setminus P_0$, the rapid decay of the Fourier transform in \eqref{numerator_poisson} will be quite useful in our management, provided that $\arg \gamma_{\mf{k}\mf{a}\overline{\mf{b}}}$ is guaranteed to not be too small. Once focused on $\mf{k}\mf{a}\overline{\mf{b}} \in P_0$ (and its $j=0$ term in \eqref{numerator_poisson}), we will be permitted to truncate at $\mf{N}\mf{k} \ll X$ by the positivity of $\Phi(u) W_K(\mf{N} \mf{k}, uX)$ which is established in Lemma \ref{cutoff_positive}. Afterward, the resulting sum will undergo a somewhat lengthy factorization process enabled by Rankin's trick, as recalled in Section \ref{factorization_section}.

We begin by forming a handle on the decay of the transform of the function
\[\Phi(u)W_K(\mf{N}\mf{k},uX)
\]
above. It will suffice to place bounds on its derivatives. Note that
\begin{align}
\nonumber
\frac{\dx^n}{\dx u^n} \Phi(u) W_K(\mf{N} \mf{k}, uX) = \sum_{0 \leqslant m \leqslant n} \binom{n}{m} \Phi^{(n-m)}(u) \frac{\dx^m}{\dx u^m} W_K (\mf{N} \mf{k},uX)
\end{align}
and recall $\Phi$ is supported in $[1,2]$ with uniformly bounded derivatives, as stated in Section \ref{setup}. Setting $c_K = 4 \pi / \sqrt{D}$ and using \eqref{cutoff_bounds} from Lemma \ref{cutoff_lem}, we find that the above is
\begin{equation}
\label{derivatives} \ll_n \left\{
\begin{array}{ll}
X^{n/2} \left( \frac{u X}{c_K \mf{N}\mf{k}} \right)^{\sqrt{uX/2}}, & \text{if } u \leqslant c_K \mf{N}\mf{k}/X; \\
1 + X^{n/2} \left( \frac{u X}{c_K \mf{N}\mf{k}}  \right)^{-\sqrt{uX/2}}, & \text{if } u > c_K \mf{N} \mf{k}/X.
\end{array}
\right.
\end{equation}
Repeating integration by parts $n$ times on the Fourier transform of interest and exercising the bound in \eqref{derivatives} shows that, for $v \neq 0$,
\begin{align}
\label{transform_bound_1}
\mathcal{F}_u [\Phi (u) W_K (\mf{N} \mf{k};uX)] (v) \ll_n v^{-n} X^{n/2} S(\mf{N}\mf{k},X) + v^{-n} \max \left( 0,2-\tfrac{c_K \mf{N}\mf{k}}{X} \right)
\end{align}
where
\begin{equation}
\label{S_definition} S(\mf{N}\mf{k},X) := \int_{\substack{u \leqslant c_K \mf{N} \mf{k}/X \\ u \in (1,2)}} \left( \tfrac{uX}{c_K \mf{N} \mf{k}} \right)^{\sqrt{uX/2}} \dx u + \int_{\substack{u \geqslant c_K \mf{N} \mf{k} / X \\ u \in (1,2)}} \Big( \tfrac{c_K \mf{N}\mf{k}}{uX} \Big)^{\sqrt{uX/2}} \dx u.
\end{equation}
Using the substitution $w = uX/(c_K\mf{N}\mf{k})$ in \eqref{S_definition} and bounding the exponents trivially based on the interval of integration reveals
\begin{align}
\label{S_bound}
S(\mf{N}\mf{k},X) \ll_n
\begin{cases}
X^{-1/2} \left(c_K \mf{N} \mf{k}/X \right)^{\sqrt{X/2}}, & \mf{N} \mf{k} \leqslant X/c_K; \\
X^{-1/2}, & X/c_K < \mf{N} \mf{k} \leqslant 2X/c_K; \\
X^{-1/2} \left(\tfrac12 c_K \mf{N} \mf{k} /X \right)^{-\sqrt{X/2}}, & 2X/c_K < \mf{N} \mf{k},
\end{cases}
\end{align}
so that the insertion of \eqref{S_bound} into \eqref{transform_bound_1}, combined with a replacement of $n$ with any real-valued $\eta \geqslant 1$ via interpolation (a geometric mean argument), yields
\begin{align}
& \nonumber \mathcal{F}_u [\Phi(u) W_K(\mf{N}\mf{k};uX)](v) \\
& \label{transform_bound_2} \ll v^{-\eta} \chi_{(0,2X/c_K]}(\mf{N}\mf{k}) + \frac{X^{\eta/2-1/2}}{v^{\eta}} \min \left( 1,\left(c_K \mf{N} \mf{k}/X \right)^{\sqrt{X/2}},\left(\tfrac12 c_K \mf{N} \mf{k} /X \right)^{-\sqrt{X/2}} \right).
\end{align}
The next two subsections will break the outer sum of \eqref{numerator_poisson} into two cases: the \emph{diagonal case} $\mf{k}\mf{a}\overline{\mf{b}} \in P_0$ and the \emph{off-diagonal case} $\mf{k}\mf{a}\overline{\mf{b}} \in P \setminus P_0$.

\subsection{Off-diagonal contribution}
\label{offdiagonal_section}
We first consider the \emph{off-diagonal case} $\mf{k}\mf{a}\overline{\mf{b}} \in P \setminus P_0$, and we will do so with four subcases, the first of which is the \emph{off-diagonal subcase I}: $\mf{N}\mf{k} \leqslant 2X/c_K$ and $|\arg \gamma_{\mf{k}\mf{a}\overline{\mf{b}}}| \geqslant X^{-1/2}$. It will be beneficial to subdivide the sum over $\mf{k}$ in \eqref{numerator_poisson} according to the size of $\mf{N}\mf{k}$ along the dyadic partitions
\[
D_{\nu}(X) = (2^{-\nu}X/c_K, 2^{-\nu + 1}X/c_K]
\]
and $\arg \gamma_{\mf{k} \mf{a} \overline{\mf{b}}}$ according to the intervals
\[
I_m(X) = [mX^{-1/2}, (m+1)X^{-1/2}).
\]
By the bound in \eqref{transform_bound_2}, the contribution to \eqref{numerator_poisson} for $\mf{N} \mf{k} \leqslant 2X/c_K$ with $\eta \geqslant 1$ is
\begin{equation}
\label{apply_transform_bound}
    \ll X^{1/2 - \eta / 2} \sum_{\mf{N} \mf{a}, \mf{N}\mf{b} \leqslant N} r(\mf{a}) r(\mf{b}) \sum_{\substack{ \mf{k}\mf{a}\overline{\mf{b}} \in P \\ \mf{N}\mf{k} \leqslant 2X/c_K}} \mf{N} \mf{k}^{-1/2} \sum_j \left( \omega_K^{-1} j - \arg \gamma_{\mf{k}\mf{a}\overline{\mf{b}}} / 2 \pi \right)^{- \eta}
\end{equation}
where, by the definition of $\gamma_{\mf{n}}$, the main contribution to the sum over $j$ above comes from $j=0$. Precisely,
\begin{equation}
\label{poisson_inner_bound}
\sum_j \left( \omega_K^{-1} j - \arg \gamma_{\mf{k}\mf{a}\overline{\mf{b}}} / 2 \pi \right)^{- \eta} \ll_{\eta} (\arg \gamma_{\mf{k}\mf{a}\overline{\mf{b}}})^{-\eta}.
\end{equation}
By the restriction on the argument of $\gamma_{\mf{k}\mf{a}\overline{\mf{b}}}$ in this subcase, we now bound the contribution to \eqref{apply_transform_bound} as
\begin{align}
\nonumber & \ll_{\eta} X^{1/2 - \eta/2} \sum_{\mf{N}\mf{a},\mf{N}\mf{b} \leqslant N} r(\mf{a}) r(\mf{b}) X^{\eta/2} \sum_{m \geqslant 1} m^{-\eta} \sum_{0 \leqslant \nu \leqslant \log_2 X} 2^{\nu/2} X^{-1/2} \sum_{\substack{\mf{N} \mf{k} \in D_{\nu}(X) \\ | \arg \gamma_{\mf{k}\mf{a}\overline{\mf{b}}}| \in I_m(X) \\ \mf{k}\mf{a}\overline{\mf{b}} \in P \setminus P_0}} 1 \\
& \nonumber \ll_{\eta} \sum_{\mf{N}\mf{a},\mf{N}\mf{b} \leqslant N} r(\mf{a}) r(\mf{b}) \sum_{0 \leqslant \nu \leqslant \log_2 X} 2^{\nu/2} \sum_{\substack{\mf{N} \mf{k} \in D_{\nu}(X) \\ | \arg \gamma_{\mf{k}\mf{a}\overline{\mf{b}}}| \in I_m(X) \\ \mf{k}\mf{a}\overline{\mf{b}} \in P \setminus P_0}} 1
\end{align}
Bounding the inner sum above can take place by counting the number of $\OO_K$ lattice points associated to each $\mf{k} \mf{a} \overline{\mf{b}}$ (translating $\mf{k}$ to a generator in $\C$ by one of a fixed \emph{finite} number of representatives of the class group) within the annulus defined by $D_{\nu}(X)$ while restricted to the appropriate angular sector. Note that the area of this annular sector is commensurate to $2^{- \nu} X^{1/2}$ and the perimeter to $2^{-\nu/2} X^{1/2}$, allowing us to estimate the number of lattice points as $\ll 2^{ -\nu /2} X^{1/2} + 1$.

Therefore the total contribution to \eqref{numerator_poisson} from \emph{off-diagonal subcase I} is
\begin{equation}
\label{offdiagonal_subcase_1_bound} \ll_{\eta} X^{1/2} \log X \sum_{\mf{N}\mf{a},\mf{N}\mf{b} \leqslant N} r(\mf{a}) r(\mf{b}).
\end{equation}
We next consider the \emph{off-diagonal subcase II}: $\mf{N}\mf{k} \leqslant 2X/c_K$ and $|\arg \gamma_{\mf{k}\mf{a}\overline{\mf{b}}}| < X^{-1/2}$. By the series expansion of sine around zero, we have $|\arg \gamma_{\mf{k}\mf{a}\overline{\mf{b}}}| \gg N^{-1} \mf{N}\mf{k}^{-1/2}$ for $\mf{k}\mf{a} \overline{\mf{b}} \in P \setminus P_0$, and so using \eqref{transform_bound_2} to again obtain \eqref{apply_transform_bound} with its associated bound \eqref{poisson_inner_bound} on the sum over $j$ shows that the contribution to \eqref{numerator_poisson} from the \emph{off-diagonal subcase II} is
\begin{align}
& \nonumber \ll_{\eta} X^{1/2-\eta/2} \sum_{\mf{N}\mf{a},\mf{N}\mf{a} \leqslant N} r(\mf{a}) r(\mf{b}) \sum_{0 \leqslant \nu \leqslant \log_2 X} (X 2^{-\nu})^{-1/2} \sum_{\substack{\mf{N}\mf{k} \in D_{\nu}(X) \\ \mf{k}\mf{a}\overline{\mf{b}} \in P \setminus P_0 \\ |\arg \gamma_{\mf{k}\mf{a}\overline{\mf{b}}}| < X^{-1/2} }} N^{\eta} (X 2^{-\nu})^{\eta/2} \\
\nonumber & \ll N^{\eta} \sum_{\mf{N}\mf{a},\mf{N}\mf{a} \leqslant N} r(\mf{a}) r(\mf{b}) \sum_{0 \leqslant \nu \leqslant \log_2 X} 2^{\nu/2 - \nu \eta / 2} \sum_{\substack{\mf{N}\mf{k} \in D_{\nu}(X) \\ \mf{k}\mf{a}\overline{\mf{b}} \in P \setminus P_0 \\ |\arg \gamma_{\mf{k}\mf{a}\overline{\mf{b}}}| < X^{-1/2} }} 1.
\end{align}
Bounding the inner sum above can take place by counting lattice points within the annulus defined by $D_{\nu}(X)$ while restricted to the appropriate angular sector, as done in the previous subcase. This again allows us to estimate the number of lattice points as $\ll 2^{ -\nu /2} X^{1/2} + 1$. Substitution to achieve an upper bound then shows the contribution to \eqref{numerator_poisson} from \emph{off-diagonal subcase II} is
\begin{equation}
\label{offdiagonal_subcase_2_bound}
\ll_{\eta} X^{1/2}N^{\eta} \sum_{\mf{N} \mf{a}, \mf{N} \mf{b} \leqslant N} r(\mf{a})r(\mf{b}).
\end{equation}

In the remaining two cases, we will split $\mf{N}\mf{k}$ according to the intervals
\[
D'_{\nu}(X) = (\nu 2X/c_K,(\nu+1)2X/c_K].
\]
Now consider the \emph{off-diagonal subcase III}: $\mf{N} \mf{k} > 2X/c_K$ and $|\arg \gamma_{\mf{k}\mf{a}\overline{\mf{b}}}| \geqslant X^{-1/2}$. With respect to \eqref{transform_bound_2}, notice that we have entered the range where
\[
\mathcal{F}_u [\Phi(u) W_K(\mf{N}\mf{k};uX)](v)
\]
decays rapidly. Again exercising \eqref{transform_bound_2}, one finds that the contribution to \eqref{numerator_poisson} with the constraint $\mf{N}\mf{k} > 2X/c_K$ and  observation \eqref{poisson_inner_bound} is
\begin{equation}
\label{apply_transform_bound3}
    \ll_{\eta} X^{1/2 - \eta/2} \left( \frac{2X}{c_K}  \right)^{\sqrt{X/2}} \sum_{\mf{N}\mf{a},\mf{N}\mf{b} \leqslant N} r(\mf{a}) r(\mf{b}) \sum_{\substack{ \mf{k}\mf{a}\overline{\mf{b}} \in P \setminus P_0 \\ \mf{N}\mf{k} > 2X/c_K}} \mf{N} \mf{k}^{-1/2 - \sqrt{X/2}} (\arg \gamma_{\mf{k} \mf{a} \overline{\mf{b}}})^{-\eta}.
\end{equation}
We may bound the inner sum above
\begin{align}
\nonumber \sum_{\nu \geqslant 1} & \sum_{m \geqslant 1} \sum_{\substack{\mf{N}\mf{k} \in D_{\nu}'(X) \\ \mf{k} \mf{a} \overline{\mf{b}} \in P\\ | \arg \gamma_{\mf{k}\mf{a}\overline{\mf{b}}}| \in I_m(X)}} \mf{N} \mf{k}^{-1/2-\sqrt{X/2}} (\arg \gamma_{\mf{k} \mf{a} \overline{\mf{b}}})^{-\eta} \\
& \label{inner_bound3} \ll X^{\eta/2} \left( \frac{2X}{c_K}  \right)^{-1/2 - \sqrt{X/2}} \sum_{\nu \geqslant 1} \nu^{-1/2 - \sqrt{X/2}} \sum_{m \geqslant 1} m^{-\eta} \sum_{\substack{ \mf{N}\mf{k} \in D_{\nu}'(X) \\ \mf{k} \mf{a} \overline{\mf{b}} \in P \\ |\arg \gamma_{\mf{k} \mf{a} \overline{\mf{b}}}| \in I_m(X)}} 1
\end{align}
where a lattice point counting argument, explained previously in \emph{off-diagonal subcase I}, can be applied where the area of the angular sector is of size $X^{1/2}$ and the perimeter of size $\nu^{-1/2} X^{1/2}$. We therefore bound the inner sum of \eqref{inner_bound3} as $\ll X^{1/2}$. Subsequent evaluation and substitution of \eqref{inner_bound3} into \eqref{apply_transform_bound3} shows that the contribution from \emph{off-diagonal subcase III} to \eqref{numerator_poisson} is
\begin{equation}
    \label{offdiagonal_subcase_3_bound} \ll_{\eta} X^{1/2} \sum_{\mf{N} \mf{a}, \mf{N} \mf{b} \leqslant N} r(\mf{a})r(\mf{b}).
\end{equation}

Finally, there is the \emph{off-diagonal subcase IV}: $\mf{N} \mf{k} \geqslant 2X/c_K$ and $|\arg \gamma_{\mf{k}\mf{a}\overline{\mf{b}}}| < X^{-1/2}$. With \eqref{apply_transform_bound3}, the bound $|\arg \gamma_{\mf{k} \mf{a} \overline{\mf{b}}}| \gg \mf{N}\mf{k}^{-1/2} N^{-1}$, and the same lattice point counting argument in previous cases, the contribution to \eqref{numerator_poisson} from the inner sum of \eqref{apply_transform_bound3} keeping in mind \eqref{poisson_inner_bound} is
\begin{align}
    \ll_{\eta} N^{\eta} \sum_{\nu \geqslant 1} \sum_{\substack{\mf{N} \mf{k} \in D_{\nu}' \\ |\arg \gamma_{\mf{k}\mf{a}\overline{\mf{b}}} | < X^{-1/2}}} \mf{N} \mf{k}^{-1/2 - \sqrt{X/2} + \eta/2} \ll N^{\eta} X^{\eta/2} \left( \frac{2X}{c_K} \right)^{-\sqrt{X/2}}.
\end{align}
Substitution of the above into \eqref{apply_transform_bound3} then bounds the contribution to \eqref{numerator_poisson} from \emph{off-diagonal subcase IV} by
\begin{equation}
\label{offdiagonal_subcase_4_bound}
\ll_{\eta} X^{1/2} N^{\eta} \sum_{\mf{N} \mf{a}, \mf{N} \mf{b} \leqslant N} r(\mf{a}) r(\mf{b}).
\end{equation}

In consideration of the bounds from each of \emph{off-diagonal subcases I-IV}, it is shown that the total contribution to \eqref{numerator_poisson} from the \emph{off-diagonal subcases I-IV} is bounded by \eqref{offdiagonal_subcase_4_bound}. Concisely, for $\eta \geqslant 1$ and $N \gg \log X$ (as will be chosen), we have
\begin{align}
\nonumber X \sum_{\substack{\mf{N} \mf{a}, \mf{N} \mf{b} \leqslant N \\ \mf{k} \mf{a} \overline{\mf{b}} \in P\setminus P_0 }} \frac{r(\mf{a}) r(\mf{b})}{\sqrt{\mf{N} \mf{k}}} \sum_{j} & \mathcal{F}_u [\Phi(u) W_K(\mf{N} \mf{k}, u X)] \left( X \left( \omega_K^{-1}j - \arg \gamma_{\mf{a} \overline{\mf{b}} \mf{k}} / 2 \pi \right) \right)\\
\label{offdiagonal_total}
& \ll_{\eta} X^{1/2} N^{\eta} \sum_{\mf{N}\mf{a},\mf{N}\mf{b} \leqslant N} r(\mf{a}) r(\mf{b}).
\end{align}

\subsection{Diagonal contribution}
\label{diagonal_contribution}
With respect to the contribution from $\mf{k}\mf{a}\overline{\mf{b}} \in P_0$ to \eqref{numerator_poisson}, one may readily handle the case $j \neq 0$ of the inner sum by the approach from the previous subsection since $\omega_K^{-1}j - \arg \gamma_{\mf{a} \overline{\mf{b}} \mf{k}} / 2 \pi$ here is at least as large as its counterpart in the off-diagonal case with $j=0$. This certainly yields no more than the quantity \eqref{offdiagonal_total} found in the off-diagonal case. Further, for $j=0$ in the inner sum of \eqref{numerator_poisson}, the non-negativity of $\Phi(u)W_K(\mf{N}\mf{k},uX)$ following from Lemma \ref{cutoff_positive} allows one use the outer sum according to $\mf{N}\mf{k} \leqslant X/ 2 c_K$ to produce a lower bound on \eqref{numerator_poisson}, provided that the term resulting from truncation is $\gg$ than the quantity in \eqref{offdiagonal_total} with appropriate implied constant.

Note that \eqref{cutoff_bounds} from Lemma \ref{cutoff_lem} demonstrates that $\mathcal{F}_u [\Phi(u) W_K(\mf{N} \mf{k}; uX)]\left( 0 \right)$ is effectively constant in this range. Write $\mf{a}=\mf{a}_0\mf{a}'$ and $\mf{b}=\mf{b}_0\mf{b}'$ as in \eqref{factor_ideals}. Further factorizing
\begin{equation}
\label{factor_ideals_2}
    \mf{a}'=(\mf{a}',\mf{b}')\mf{a}'' \quad \text{and} \quad \mf{b}' = (\mf{a}', \mf{b}')\mf{b}'',
\end{equation}
we note that $\mf{k}\mf{a}\overline{\mf{b}} \in P_0$ precisely when $\mf{k}=\mf{k}_0 \overline{\mf{a}''}\mf{b}''$ for $\mf{k}_0 \in P_0$. The truncated sum discussed above is of magnitude
\begin{align*}
X \sum_{\mf{N} \mf{a}, \mf{N} \mf{b} \leqslant N} r(\mf{a}) r(\mf{b}) \sum_{\substack{\mf{N}\mf{k} \leqslant X/2c_K \\ \mf{k} \mf{a} \overline{\mf{b}} \in P_0 }} \mf{N} \mf{k}^{-1/2} & = X\sum_{\mf{N} \mf{a}, \mf{N} \mf{b} \leqslant N} \frac{r(\mf{a}) r(\mf{b})}{\sqrt{\mf{N}(\mf{a}''\mf{b}'')}} \sum_{\mf{N}\mf{k}_0 \leqslant X/2 c_K\mf{N}(\mf{a}''\mf{b}'')} \mf{N} \mf{k}_0^{-1/2} \\
& \gg X N^{-1} \sum_{\mf{N} \mf{a}, \mf{N} \mf{b} \leqslant N} r(\mf{a}) r(\mf{b}).
\end{align*}
when $N \ll X^{1/2 + \varepsilon}$. Indeed, we now finally set $N = X^{1/4-\varepsilon}$ and $\eta = 1 + \varepsilon$ so that $X^{1/2}N^{\eta} = o(X/N)$. Keeping in mind the dual sum of \eqref{numerator_AFE}, we find that two times \eqref{numerator_poisson} --- thus ultimately \eqref{numerator_AFE} --- is bounded from below in absolute value by
\begin{equation}
\label{diagonal_extracted}
\left( 2h_K \omega_K^{-1} \hat{\Phi}(0) + o(1)\right) X \sum_{\substack{\mf{N} \mf{a}, \mf{N} \mf{b} \leqslant N \\ \mf{N}\mf{k} \leqslant X/2 c_K \\ \mf{k} \mf{a} \overline{\mf{b}} \in P_0 }} \frac{r(\mf{a}) r(\mf{b})}{\sqrt{\mf{N} \mf{k}}}.
\end{equation}

As done in Section \ref{bounding_denominator}, we wish to factorize \eqref{diagonal_extracted} so that we may more readily compare it to \eqref{denominator} on a subpower scale, thereby detecting large values of $|L(\tfrac12,\xi_{\ell})|$, which are expected to be subpower with respect to $X$; see again \eqref{ratio} and Theorem \ref{main_result}.

\subsection{Factorization of the diagonal sum}
\label{factorization_section}
As in \eqref{factor_ideals} and \eqref{factor_ideals_2}, write $\mf{a} = \mf{a}_0 \mf{a}'$ and $\mf{b} = \mf{b}_0 \mf{b}'$ so that
\begin{equation*}
    \mf{a} \overline{\mf{b}} = \mf{a}_0 \mf{b}_0 (\mf{a}',\mf{b}') \overline{(\mf{a}',\mf{b}')} \mf{a}'' \overline{\mf{b}''}
\end{equation*}
where $(\mf{a}'',\mf{b}'') = 1$ and $\mf{a}''\overline{\mf{b}''} \in P'$. Then for $\mf{ka} \overline{\mf{b}} \in P_0$, we must have
\begin{equation*}
    \mf{k} = \mf{k}_0 \overline{\mf{a}''} \mf{b}''
\end{equation*}
for $\mf{k}_0 \in P_0$. Moving forward, should context require it, we will assume in our notation that $\mf{a}$ and $\mf{b}$, and also $\mf{a}'$ and $\mf{b}'$, factorize as above in relation to one another. The sum in \eqref{diagonal_extracted} is thus
\begin{equation}
\label{diagonal_no_k}
\sum_{\mf{N} \mf{a}, \mf{N} \mf{b} \leqslant N} \frac{r(\mf{a}) r(\mf{b})}{\sqrt{\mf{N} (\mf{a}''\mf{b}'')}} \sum_{\mf{N} \mf{k}_0 \leqslant X/2c_k \mf{N} (\mf{a}'' \mf{b}'')} \mf{N} \mf{k}_0^{-1/2} \geqslant \sum_{\substack{\mf{N} \mf{a}, \mf{N} \mf{b} \leqslant N \\ (\mf{a}''\overline{\mf{a}''},\mf{b}''\overline{\mf{b}''}) = 1}} \frac{r(\mf{a}) r(\mf{b})}{\sqrt{\mf{N} (\mf{a}''\mf{b}'')}}.
\end{equation}
We now wish to complete the sum in this lower bound over all non-zero $\mf{a}$ and $\mf{b}$. We do exactly this using Rankin's trick, which is the observation that for any non-negative sequence $(m_n)$ and any $\alpha \geqslant 0$, we have
\begin{equation*}
\sum_{0 \leqslant n \leqslant N} m_n = \sum_{n \geqslant 0} m_n + O\left( N^{-\alpha} \sum_{n \geqslant 0} n^{\alpha} m_n \right).   
\end{equation*}
Using the above, first on the sum over $\mf{a}$ on the right hand side of \eqref{diagonal_no_k}, and then on the sum over $\mf{b}$ (both times using the same value of $\alpha \geqslant 0$ to be chosen later)  shows that the right side of \eqref{diagonal_no_k} equals
\begin{align}
\label{diagonal_complete}
\sum_{\substack{\mf{a}, \mf{b} \neq 0 \\ (\mf{a}''\overline{\mf{a}''},\mf{b}''\overline{\mf{b}''}) = 1}} \frac{r(\mf{a}) r(\mf{b})}{\sqrt{\mf{N} (\mf{a}''\mf{b}'')}} & + O \left( \Xi(\alpha, 0) + \Xi(\alpha, \alpha) \right)
\end{align}
where
\begin{equation}
    \Xi(\alpha_1, \alpha_2) = N^{-\alpha_1 - \alpha_2} \sum_{\substack{\mf{a}, \mf{b} \neq 0\\ (\mf{a}''\overline{\mf{a}''},\mf{b}''\overline{\mf{b}''}) = 1}} \frac{r(\mf{a}) r(\mf{b})}{\sqrt{\mf{N} (\mf{a}''\mf{b}'')}} \mf{N}\mf{a}^{\alpha_1} \mf{N}\mf{b}^{\alpha_2}.
\end{equation}

We first factorize the sum in $\Xi (\alpha, \alpha)$ of \eqref{diagonal_complete}, where the factorization of the expected main term in \eqref{diagonal_complete} will follow from the substitution $\alpha =0$. Afterward, we will factorize the sum $\Xi(\alpha, 0)$ in \eqref{diagonal_complete} in a similar way. To begin, write the sum in $\Xi (\alpha, \alpha)$ as
\begin{equation}
\label{diagonal_error}
 \sum_{\substack{\mf{a}', \mf{b}' \in P' \\ (\mf{a}''\overline{\mf{a}''},\mf{b}''\overline{\mf{b}''}) = 1}} \frac{r(\mf{a}') r(\mf{b}')}{\sqrt{\mf{N} (\mf{a}''\mf{b}'')}} \mf{N}(\mf{a}'\mf{b}')^{\alpha} \sum_{\substack{\mf{a}_0 \in P_0 \\ (\mf{a}_0,\mf{a}')=1}} r(\mf{a}_0)\mf{N}\mf{a}_0^{\alpha} \sum_{\substack{\mf{b}_0 \in P_0 \\ (\mf{b}_0,\mf{b}')=1}} r(\mf{b}_0)\mf{N}\mf{b}_0^{\alpha}.
\end{equation}
After factorizing the two inner sums of the above expression as
\begin{equation*}
\sideset{}{'} \prod_{\mf{p}} A_{\alpha}(\mf{p})^2 \prod_{\mf{p} \mid \mf{a}'} A_{\alpha}(\mf{p})^{-1} \prod_{\mf{p} \mid \mf{b}'} A_{\alpha}(\mf{p})^{-1}, \quad A_{\alpha}(\mf{p}) := (1 + r(\mf{p})^2 \mf{N} \mf{p}^{2\alpha}),
\end{equation*}
we write $\mf{c}' = (\mf{a}',\mf{b}')$ so that \eqref{diagonal_error} becomes
\begin{equation}
\label{diagonal_error2}
\sideset{}{'} \prod_{\mf{p}} A_{\alpha}(\mf{p})^2 \sum_{\mf{c}' \in P'} r(\mf{c}')^2 (\mf{N}\mf{c}')^{2\alpha} \prod_{\mf{p} \mid \mf{c}'} A_{\alpha}(\mf{p})^{-2} \sum_{\substack{(\mf{a}''\overline{\mf{a}''},\mf{b}''\overline{\mf{b}''}) = 1 \\ (\mf{a}''\mf{b}'',\mf{c}'\overline{\mf{c}'}) = 1}} \frac{r(\mf{a}'') r(\mf{b}'')}{(\mf{N}\mf{a}''\mf{b}'')^{1/2-\alpha}} \prod_{\mf{p} \mid \mf{a}''\mf{b}''} A_{\alpha}(\mf{p})^{-1}.
\end{equation}
With $\mf{c}'' = \mf{a}''\mf{b}''$ (recalling that $\mf{a}'', \mf{b}'' \in P'$ by their definitions) and consideration of the divisor function $d$, we may write the innermost sum above as
\begin{equation}
\label{diagonal_doubleprime}
\sum_{\substack{\mf{c}'' \in P' \\ (\mf{c}'',\mf{c}'\overline{\mf{c}'})=1}} \frac{r(\mf{c}'') d(\mf{c}'')}{(\mf{N}\mf{c}'')^{1/2 - \alpha}} \prod_{\mf{p} \mid \mf{c}''} A_{\alpha}(p)^{-1} = \sideset{}{'} \prod_{\mf{p} \nmid \mf{c}'\overline{\mf{c}'}} B_{\alpha}(\mf{p}), \quad B_{\alpha}(\mf{p}) := 1 + \frac{4r(\mf{p})}{\mf{N}\mf{p}^{1/2-\alpha}} A_{\alpha}(\mf{p})^{-1},
\end{equation}
so that inserting \eqref{diagonal_doubleprime} into the inner sum of \eqref{diagonal_error2} yields
\begin{align}
\nonumber
\sideset{}{'} \prod_{\mf{p}} & A_{\alpha}(\mf{p})^2 B_{\alpha}(\mf{p}) \sum_{\mf{c}' \in P'} r(\mf{c}')^2 (\mf{N} \mf{c}')^{2\alpha} \sideset{}{'} \prod_{\mf{p} \mid \mf{c}'\overline{\mf{c}'}} A_{\alpha}(\mf{p})^{-2} B_{\alpha}(\mf{p})^{-1} \\
\nonumber
& = \sideset{}{'} \prod_{\mf{p}} \left( A_{\alpha}(\mf{p})^2 B_{\alpha}(\mf{p}) + 2r(\mf{p})^2 \mf{N}\mf{p}^{2\alpha} \right)\\
\label{diagonal_error2_factorized}
&= \sideset{}{'} \prod_{\mf{p}} \left( 1 + \frac{4 r(\mf{p})}{\mf{N} \mf{p}^{1/2}} \mf{N}\mf{p}^{\alpha} + 4r(\mf{p})^2 \mf{N}\mf{p}^{2\alpha} + \frac{4 r(\mf{p})^3}{\mf{N} \mf{p}^{1/2}} \mf{N}\mf{p}^{3\alpha} + r(\mf{p})^4 \mf{N} \mf{p}^{4\alpha} \right)
\end{align}
as the factorization of the sum in $\Xi (\alpha, \alpha)$ in \eqref{diagonal_complete}.

Using a similar strategy, one may also factorize the sum in $\Xi (\alpha, 0)$ of \eqref{diagonal_complete}. This quantity is
\begin{equation}
\label{diagonal_error_1}
 \sum_{\substack{\mf{a}', \mf{b}' \in P' \\ (\mf{a}''\overline{\mf{a}''},\mf{b}''\overline{\mf{b}''}) = 1}} \frac{r(\mf{a}') r(\mf{b}')}{\sqrt{\mf{N} \mf{a}''\mf{b}''}} (\mf{N}\mf{a}')^{\alpha} \sum_{\substack{\mf{a}_0 \in P_0 \\ (\mf{a}_0,\mf{a}')=1}} r(\mf{a}_0)\mf{N}\mf{a}_0^{\alpha} \sum_{\substack{\mf{b}_0 \in P_0 \\ (\mf{b}_0,\mf{b}')=1}} r(\mf{b}_0).
\end{equation}
We may factorize the two inner sums of the above expression as
\begin{equation*}
\sideset{}{'} \prod_{\mf{p}} A_{\alpha}(\mf{p})A_0(\mf{p}) \prod_{\mf{p} \mid \mf{a}'} A_{\alpha}(\mf{p})^{-1} \prod_{\mf{p} \mid \mf{b}'} A_0(\mf{p})^{-1},
\end{equation*}
so that \eqref{diagonal_error_1} becomes
\begin{align}
\nonumber \sideset{}{'} \prod_{\mf{p}} A_{\alpha}(\mf{p})A_0(\mf{p}) & \sum_{\mf{c}' \in P'} r(\mf{c}')^2 (\mf{N}\mf{c}')^{\alpha} \prod_{\mf{p} \mid \mf{c}'} A_{\alpha}(\mf{p})^{-1} A_0(\mf{p})^{-1}  \\
& \label{intermediate_factorization} \times \sum_{\substack{(\mf{a}''\overline{\mf{a}''},\mf{b}''\overline{\mf{b}''}) = 1 \\ (\mf{a}''\mf{b}'',\mf{c}'\overline{\mf{c}'}) = 1}} \frac{r(\mf{a}'') r(\mf{b}'')}{(\mf{N}\mf{a}''\mf{b}'')^{1/2}} \prod_{\mf{p} \mid \mf{a}''} A_{\alpha}(\mf{p})^{-1} \mf{N}\mf{p}^{\alpha} \prod_{\mf{p} \mid \mf{b}''} A_0(\mf{p})^{-1}.
\end{align}
Rewriting the inner sum above as follows and factorizing allow us to obtain
\begin{align*}
    \sum_{\substack{\mf{c}'' \in P' \\ (\mf{c}'',\mf{c}'\overline{\mf{c}'})=1}} \frac{r(\mf{c}'')}{(\mf{N}\mf{c}'')^{1/2}} \prod_{\mf{p} \mid \mf{c}''} A_0(\mf{p})^{-1} & \sum_{\mf{a}'' \mid \mf{c}''} (\mf{N}\mf{a}'')^{\alpha} \prod_{\mf{p} \mid \mf{a}''} A_0(\mf{p}) A_{\alpha}(\mf{p})^{-1} \\
    & = \sideset{}{'} \prod_{\mf{p} \nmid \mf{c}'\overline{\mf{c}'}} \left( 1 + \frac{2 r(\mf{p})}{\mf{N}\mf{p}^{1/2}} (A_0(\mf{p})^{-1} + \mf{N} \mf{p}^{\alpha} A_{\alpha}(\mf{p})^{-1} ) \right)
\end{align*}
so that for
\begin{equation*}
    C_{\alpha}(\mf{p}) := A_{\alpha}(\mf{p}) A_0(\mf{p}) + \frac{2 r(\mf{p})}{\mf{N} \mf{p}^{1/2}}(A_{\alpha}(\mf{p}) + \mf{N} \mf{p}^{\alpha} A_0(\mf{p}))
\end{equation*}
we may further factorize \eqref{intermediate_factorization} as
\begin{equation*}
    \sideset{}{'} \prod_{\mf{p}} C_{\alpha}(\mf{p}) \sum_{\mf{c}' \in P'} r(\mf{c}')^2 (\mf{N}\mf{c}')^{\alpha} \prod_{\mf{p} \mid \mf{c}'} C_{\alpha}(\mf{p})^{-1} = \sideset{}{'} \prod_{\mf{p}} \left( C_{\alpha}(\mf{p}) + 2r(\mf{p})^2 \mf{N}\mf{p}^{\alpha} \right).
\end{equation*}
This ultimately yields for the sum in $\Xi (\alpha, 0)$ a factorization of
\begin{align}
\nonumber
\sideset{}{'} \prod_{\mf{p}} \bigg( 1 + \frac{2 r(\mf{p})}{\mf{N}\mf{p}^{1/2}} (1 + \mf{N}\mf{p}^{\alpha}) + r(\mf{p})^2 &  (1 + 2 \mf{N}\mf{p}^{\alpha} + \mf{N}\mf{p}^{2\alpha}) \\
& \label{diagonal_error1_factorized} + \frac{2r(\mf{p})^3}{\mf{N}\mf{p}^{1/2}}(\mf{N}\mf{p}^{\alpha} + \mf{N}\mf{p}^{2\alpha}) + r(\mf{p})^4 \mf{N} \mf{p}^{2\alpha} \bigg).
\end{align}

Once a choice of resonator coefficients is made, it will become necessary to show that
\begin{equation}
\label{target_bound_after_coefficients}
    \Xi(\alpha, 0) + \Xi(\alpha, \alpha) = o \left( \Xi(0,0) \right)
\end{equation}
so that the main term of \eqref{diagonal_complete} can be used as a lower bound on \eqref{diagonal_extracted}, and hence as a lower bound on the numerator of \eqref{numerator_AFE}.

\section{Choice of resonator coefficients}

We will now choose explicit values for all remaining parameters in our application of the resonance method. Recall the properties of $r(\mf{p})$ given in the beginning of Section \ref{setup} and let
\begin{equation}
\label{parameters}
    L = \sqrt{\tfrac12 \log N \log \log N}; \quad r(\mf{p}) = \frac{L}{\mf{N}\mf{p}^{1/2} \log \mf{N} \mf{p}}, \quad L^2 \leqslant \mf{N} \mf{p} \leqslant \exp (\log ^2 L),
\end{equation}
and $r(\mf{p}) = 0$ for prime ideals $\mf{p}$ outside of this range. Further, we set the Rankin's trick parameter $\alpha = 1/ \log^3 L$.

\subsection{Ratio of error terms to main term}

We will need to examine each of $\Xi(\alpha, 0)$, $\Xi(\alpha, \alpha)$, and $\Xi(0,0)$ on a logarithmic scale to demonstrate that the main term of \eqref{diagonal_complete} truly dominates the error terms. Expanding
\[
\mf{N} \mf{p}^{\alpha} = 1 + O (\alpha \log \mf{N} \mf{p}), \quad \mf{N} \mf{p} \leqslant \exp ( \log^2 L),
\]
then making the crude observations
\begin{align*}
 \sideset{}{'} \sum_{\mf{p}} & \frac{r(\mf{p})^3 \log \mf{N} \mf{p} }{\mf{N} \mf{p}^{1/2}} \ll L^3 \sum_{\mf{N} \mf{p} \geqslant L^2}  \mf{N}\mf{p}^{-2} (\log \mf{N} \mf{p})^{-2} \ll \frac{(\log N)^{1/2}}{ (\log \log N)^{5/2}}\\
& \text{and} \quad \sideset{}{'} \sum_{\mf{p}} r(\mf{p})^4 \log \mf{N} \mf{p} \ll L^4 \sum_{\mf{N} \mf{p} \geqslant L^2} \mf{N} \mf{p}^{-2} (\log \mf{N} \mf{p}^{-3}) \ll \frac{\log N}{(\log \log N)^3},
\end{align*}
lets us deduce from \eqref{diagonal_error2_factorized} and \eqref{diagonal_error1_factorized}, for $E_{\alpha} = \log N / (\log \log N)^3$, that
\begin{align}
\nonumber
& \frac{\Xi(\alpha, \alpha)}{\Xi(0,0)} + \frac{\Xi(\alpha, 0)}{\Xi(0,0)} \\
& \nonumber \ll \exp \left( -2 \alpha \log N + \sideset{}{'} \sum_{\mf{p}} \left( (\mf{N} \mf{p}^{\alpha} - 1)\frac{4 r(\mf{p})}{\sqrt{\mf{N}\mf{p}}} + (\mf{N}\mf{p}^{2\alpha} - 1) 4 r(\mf{p})^2 \right) \right)\\
& \nonumber \qquad \qquad \times \exp \left(  O(\alpha E_{\alpha})\right)\\
& \nonumber + \exp \left( -\alpha \log N + \sideset{}{'} \sum_{\mf{p}} \left( (\mf{N} \mf{p}^{\alpha} - 1)\frac{2 r(\mf{p})}{\sqrt{\mf{N}\mf{p}}} + (2\mf{N}\mf{p}^{\alpha} + \mf{N}\mf{p}^{2\alpha} - 3) r(\mf{p})^2 \right)\right)\\
& \label{rankin_ratio} \qquad \qquad \times \exp \left(  O(\alpha E_{\alpha})\right).
\end{align}
Consideration of the linear expansion
\[
\mf{N} \mf{p}^{\alpha} = 1 + \alpha \log \mf{N} \mf{p} + O((\alpha \log \mf{N} \mf{p})^2), \quad \mf{N} \mf{p} \leqslant \exp ( \log^2 L),
\]
in conjunction with the definitions
\begin{align}
   \nonumber G_{\alpha}(r) & := \sideset{}{'} \sum_{\mf{p}} \left( \alpha \frac{r(\mf{p}) \log \mf{N} \mf{p}}{\mf{N} \mf{p}^{1/2}}  + \alpha^2 r(\mf{p})^2 \log^2 \mf{N}\mf{p} \right), \\
   \label{rankin_final_sum} H_{\alpha}(r) & := -\alpha \log N + \alpha \sideset{}{'} \sum_{\mf{p}} 4 r(\mf{p})^2\log \mf{N}\mf{p},
\end{align}
allows us to rewrite \eqref{rankin_ratio} as
\begin{equation}
\label{rankin_bound}
\exp \big( 2H_{\alpha}(r) + O (G_{\alpha}(r)+ \alpha E_{\alpha }) \big) + \exp \big(H_{\alpha}(r) + O (G_{\alpha}(r) + \alpha E_{\alpha }) \big)
\end{equation}
where, by partial summation and the prime ideal theorem,
\begin{equation}
\label{G_bound} G_{\alpha}(r) \ll \alpha^2 \log N \log \log N \log \log \log N.
\end{equation}
Note the right hand side of the above dominates $\alpha E_{\alpha}$.

\subsection{Bounding the numerator}

Bounding sums similar to the one in \eqref{rankin_final_sum} is a general requirement of the resonance method and is delicately computed through partial summation and the prime ideal theorem. For example, this exact computation (after exercising the prime ideal theorem) is used in the \emph{proof} of \cite[Lemma 7.4]{SecondMoment} with the parameters $\omega(\mf{p}) = a_{\omega} = 2$ in their notation. Outsourcing this computation, we have
\begin{align*}
    H_{\alpha} (r) & \geqslant - \alpha \log N + \alpha \sum_{\mf{p}} \omega(p) r(\mf{p})^2 \log \mf{N} \mf{p} + O(1) \\
    & = - \alpha \frac{\log N \log \log \log N}{\log \log N} + O\left( \alpha \frac{\log N}{\log \log N} \right).
\end{align*}
Applying the bounds in \eqref{rankin_bound} and \eqref{G_bound}, we find that the ratio of the sum of errors to the expected main term in \eqref{diagonal_complete} is
\begin{equation*}
    \ll \exp \left( - \alpha \frac{\log N \log \log \log N}{2 \log \log N} \right).
\end{equation*}
Therefore, in consider of \eqref{diagonal_complete} and its relationship to \eqref{diagonal_extracted}, we have shown that the numerator of \eqref{ratio} is bounded from below by
\begin{equation}
\label{numerator}
    X h_K \omega_K^{-1} \hat{\Phi}(0) \sideset{}{'} \prod_{\mf{p}} \left( 1 + \frac{4 r(\mf{p})}{\mf{N} \mf{p}^{1/2}} + 4r(\mf{p})^2 + r(\mf{p})^4 \right).
\end{equation}

\section{Main result}

With respect to the ratio in \eqref{ratio}, we may now utilize our upper bound on its denominator \eqref{denominator} and our lower bound on its numerator \eqref{numerator}. Doing so yields
\begin{equation*}
\max_{\substack{X \leqslant \ell \leqslant 2X \\ \xi_{\ell} \in \mf{F}_{\ell}}} \log \left| L(\tfrac12, \xi_{\ell}) \right| \geqslant  o(1) + \sideset{}{'} \sum_{\mf{p}} \frac{4 r(\mf{p})}{\mf{N}\mf{p}^{1/2}} = (2 + o(1)) \frac{L}{\log L}.
\end{equation*}
Recalling our exact choice of parameters in \eqref{parameters} and the value $N=X^{1/4 - \varepsilon}$ set in Section \ref{diagonal_contribution} completes the proof of Theorem \ref{main_result}.

\printbibliography

\end{document}